\documentclass{article}

\title{Anticommutativity and the triangular lemma.}

\usepackage{amsmath,latexsym,amssymb,mathrsfs}

\date{April 2020}

\usepackage{amsmath, amssymb, amsthm, enumerate}
\usepackage[all]{xy}
\usepackage{textcomp}

\newcommand{\C}{\mathbb{C}}
\newcommand{\Set}{\mathrm{Set}}
\newcommand{\V}{\mathcal{V}}
\newcommand{\M}{\mathcal{M}}
\newcommand{\D}{\mathbb{D}}
\newcommand{\Eq}{\mathrm{Eq}}
\newcommand{\Pt}{\mathrm{Pt}}

\newcommand{\mat}[4]{\begin{pmatrix}
#1 & #3 \\
#2 & #4
\end{pmatrix}}

\newcommand{\Imp}{\mathbf{Imp}}
\newcommand{\Grp}{\mathbf{Grp}}
\newcommand{\CGrp}{\mathbf{CGrp}}
\newcommand{\Top}{\mathbf{Top}}
\newcommand{\op}{\mathrm{op}}

\theoremstyle{plain}
\newtheorem{theorem}{Theorem}[section]

\newtheorem{proposition}[theorem]{Proposition}
\newtheorem{corollary}[theorem]{Corollary}

\theoremstyle{definition}
\newtheorem{definition}[theorem]{Definition}

\newtheorem{remark}[theorem]{Remark}
\newtheorem{example}{Example}[section]

\author{Michael Hoefnagel}
\date{}
\begin{document}

\maketitle

\begin{abstract}

    For a variety $\V$, it has been recently shown that binary products commute with arbitrary coequalizers locally, i.e., in every fibre of the fibration of points $\pi: \Pt (\C) \rightarrow \C$, if and only if Gumm's shifting lemma holds on pullbacks in $\V$. In this paper, we establish a similar result connecting the so-called triangular lemma in universal algebra with a certain categorical \emph{anticommutativity} condition. In particular, we show that this anticommutativity and its local version are Mal'tsev conditions, the local version being equivalent to the triangular lemma on pullbacks.  As a corollary, every locally anticommutative variety $\V$ has directly decomposable congruence classes in the sense of Duda, and the converse holds if $\V$ is idempotent. 

\end{abstract}

\section{Introduction} \label{sec:introduction}

Recall that a category is said to be \emph{pointed} if it admits a \emph{zero object} $0$, i.e., an object which is both initial and terminal. For a variety $\V$, being pointed is equivalent to the requirement that the theory of $\V$ admit a unique constant. Between any two objects $X$ and $Y$ in a pointed category, there exists a unique morphism $0_{X,Y}$ from $X$ to $Y$ which factors through the zero object. The presence of these \emph{zero morphisms} allows for a natural notion of kernel or cokernel of a morphism $f\colon X \rightarrow Y$, namely, as an equalizer or coequalizer of $f$ and $0_{X,Y}$, respectively. Every kernel/cokernel is a monomorphism/epimorphism, and a monomorphism/epimorphism is called \emph{normal} if it is a kernel/cokernel of some morphism. Given any pointed category $\C$ with binary products, the product inclusion $X \xrightarrow{\iota_1} X \times Y$ (i.e., the unique morphism into $X\times Y$ determined by $1_X$ and $0_{X,Y}$) is always a kernel of the product projection $X \times Y \xrightarrow{\pi_2} Y$, but it is not generally true that $\pi_2$ is a cokernel of $\iota_1$. Pointed categories where every product projection is normal are said to have  \emph{normal projections} \cite{Janelidze2003}. For example, every subtractive \cite{Janelidze2005} or unital category \cite{Bou96} has normal projections. In particular, every pointed subtractive variety or any J\'onsson-Tarski variety has normal projections. 

To any object $X$, in any category $\C$, we may associate the pointed category $\Pt_{\C}(X)$ of split epimorphisms with codomain $X$ and chosen splitting --- the so-called ``category of points'' of $X$. Explicitly, objects of this category are triples $(A,p,s)$ where $A$ is an object of $\C$ and $p:A \rightarrow X$ and $s:X \rightarrow A$ are morphisms in $\C$ with $p \circ s = 1_X$. A morphism $f:(A,p,s) \rightarrow (B,q,t)$ in $\Pt_{\C}(X)$ is a morphism $f:A \rightarrow B$ in $\C$ such that $q \circ f = p$ and $f \circ s = t$, i.e., it is a morphism which makes the upward and downward facing triangles in the diagram below commute.
\[
\xymatrix{
A \ar[rr]^-f \ar@/_0.3pc/[dr]_-p &  & B \ar@/^0.3pc/[dl]^-q \\
& X \ar@/^0.3pc/[ur]^-t \ar@/_0.3pc/[ul]_s
}
\]
The categories $\Pt_{\C}(X)$ arise as the fibres of a certain functor $\pi: \Pt(\C) \rightarrow \C$, namely,  the \emph{fibration of points} \cite{Bou96}. This functor plays an important role in categorical algebra, as it classifies many notions of central importance to the subject such as, for example, the notion of a Mal'tsev category \cite{CarLamPed91, CarPedPir92}: a finitely complete category $\C$ is Mal'tsev ---  i.e., $\C$ satisfies the categorical condition associated with Mal'tsev varieties, if and only if $\Pt_{\C}(X)$ is unital \cite{Bou96} --- i.e., $\Pt_{\C}(X)$ satisfies the categorical condition associated with J\'onsson-Tarski varieties. 
There is a canonical zero object in $\Pt_{\C}(X)$ which is given by $(X,1_X,1_X)$, and with respect to this zero object the zero morphism from $(A,p,s)$ to $(B,q,t)$ is simply the composite $t \circ p$. If $\C$ has pullbacks, then we may form products in $\Pt_{\C}(X)$ in the following way. Consider a pullback $A \times_X B$ of $p$ along $q$:
\[
\xymatrix{
& X \ar@{..>}[rd]|{(s,t)}\ar@/^1pc/@{->}[rrd]^{t} \ar@/_1pc/@{->}[rdd]_{s} & & \\
& & A \times_X B \ar[r]^{p_2} \ar[d]_{p_1} \ar[dr]|d & B \ar[d]^{q} \\
& & A \ar[r]_{p} & X
}
\]
Since $p \circ s = 1_X = q \circ t$, the universal property of the pullback determines a unique morphism $(s,t)$ with the property that $p_1 \circ (s,t) = s$ and $p_2 \circ (s,t) = t$. Then it is readily checked that the diagram 
\[
(A,p,s) \xleftarrow{p_1} (A \times_X B, d, (s,t)) \xrightarrow{p_2} (B,q,t),
\]
is a product diagram in $\Pt_{\C}(X)$. Now, a finitely complete category $\C$ is then said to have \emph{normal local projections} \cite{Janelidze2004} if for every object $X$ in $\C$, the category $\Pt_{\C}(X)$ has normal projections. For varieties of algebras, it has been recently shown in \cite{Hoefnagel2019c} that normal local projections is closely related to the (local) commutativity of binary products with arbitrary coequalizers, which in turn is closely related to Gumm's \emph{shifting lemma} \cite{Gum83}. Recall that an algebra $X$ satisfies the shifting lemma if for any congruences $R,S,T$ on $X$ such that $R \cap S \leqslant T$ and $x R u, y R v$ and $x S y, u S v$, then $u T v$ implies $x T y$. A variety is said to satisfy the shifting lemma if every algebra in it satisfies the shifting lemma, which happens if and only if the variety is congruence modular. The implications of relations in the shifting lemma are usually depicted with a diagram:
\[
\xymatrix{
	u \ar@/^1.5pc/@{-}[r]^{T} \ar@{-}[r]^{S} \ar@{-}[d]_{R} & v \ar@{-}[d]^{R} \\
	x \ar@/_1.5pc/@{..}[r]_{T} \ar@{-}[r]_{S} & y
}
\]
Given morphisms $p:A \rightarrow X$ and $q:B \rightarrow X$ in a variety of algebras $\V$ then their pullback is given by $A \times_X B = \{(a,b) \mid p(a) = q(b)\}$ together with the canonical projection morphisms $p_1:(a,b) \mapsto a$ and $p_2:(a,b) \mapsto b$. The result alluded to earlier connecting normal local projections with the shifting lemma, is that a variety $\V$ has normal local projections if and only if $\V$ \emph{satisfies the shifting lemma on pullbacks}: for any morphisms $p:A \rightarrow X$ and $q:B \rightarrow X$ in $\V$ and any congruence $\Theta$ on $A \times_X B$ we have $(x,u) \Theta (y, u) \Rightarrow (x,v) \Theta (y, v)$ for any elements $(x,u), (y, u), (x,v), (y, v)$ of $A \times_X B$. This can be seen as the shifting lemma in the special case where $R = \Eq(p_1)$ and $S = \Eq(p_2)$ are the kernel congruence relations of the morphisms $p_1$ and $p_2$, respectively.
\[
\xymatrix{
	(x,u) \ar@/^1.5pc/@{-}[r]^{\Theta} \ar@{-}[r]^{\Eq(p_2)} \ar@{-}[d]_{\Eq(p_1)} & (y,u) \ar@{-}[d]^{\Eq(p_1)} \\
	(x,v) \ar@/_1.5pc/@{..}[r]_{\Theta} \ar@{-}[r]_{\Eq(p_2)} & (y,v) 
}
\]
This is because we always have $ \Eq(p_1) \cap \Eq(p_2) \subseteq \Theta$. Moreover, a variety $\V$ satisfies the shifting lemma on pullbacks if and only if finite products commute with arbitrary coequalizers in $\Pt_{\V}(X)$ for any algebra $X$ in $\V$ (see Theorem~3.7 in \cite{Hoefnagel2019c}). Recall that an algebra $X$ in a variety is said to satisfy the \emph{triangular lemma} \cite{Duda2000} (see also \cite{Chajda2003}) if for any three congruences $R, S, T$ on $X$ such that $R \cap S \leqslant T$, if $x R y S z$ and $x T z$, then $y T z$. 
	\[
	\xymatrix{
		& z \ar@{-}[d]^{S} \\
		x  \ar@{-}[ur]^{T} \ar@{-}[r]_{R}  & y \ar@{..}@/_1pc/[u]_{T} 
	}
	\]
The main aim of this paper is to illustrate a similar result connecting the triangular lemma with a natural \emph{anticommutativity} condition (Definition~\ref{def:anticommutative}) which is based the notion of commuting morphisms is the sense of Huq \cite{Huq1968} (see Definition~\ref{def:anticommutative} below). In particular, we show that for varieties, this notion of anticommutativity, as well as its local version, are both Mal'tsev properties, the latter being equivalent to the triangular lemma restricted to pullbacks: a variety $\V$ is \emph{locally anticommutative} if and only if for any two morphisms $p\colon A \rightarrow X$ and $q\colon B \rightarrow X$, and any congruence $\Theta$ on the pullback $A \times_X B$ of $p$ along $q$, if $(x,y), (x',y), (x',y') \in A \times_X B$ are any elements such that $(x,y) \Theta (x',y')$, then $(x',y) \Theta (x',y')$. 
\[
\xymatrix{
	& (x',y') \ar@{-}[d]|{\Eq(p_1)} \ar@{..}@/^2pc/[d]^{\Theta} \\
	(x,y)    \ar@{-}[ur]^{\Theta} \ar@{-}[r]_{\Eq(p_2)}  & (x',y) 
}
\]

\subsubsection*{Internal relations in categories} Given any two objects $X$ and $Y$ in a category $\C$ with binary products, recall that an internal binary relation $R$ from $X$ to $Y$ is simply a monomorphism $r=(r_1,r_2):R_0 \rightarrow X \times Y$, and the morphisms $r_1$ and $r_2$ are called the \emph{projections} of the relation. In what follows, we will say that $r$ represents the relation $R$. Throughout the rest of this paper, we will adhere to the following notation: given morphisms $x:S \rightarrow X$ and $y:S \rightarrow Y$ in $\C$ we write $x R y$ or $(x,y) \in_S R$ if the morphism $(x,y):S \rightarrow X \times Y$ factors through $r$, i.e., there exists a (necessarily unique) morphism $u:S \rightarrow R_0$ such that $r \circ u = (x,y)$. This notation extends to relations of higher arity: if $R$ is an internal $n$-ary relation $r:R_0 \rightarrow X_1 \times \cdots \times X_n$, then for any morphisms $x_i:S \rightarrow X_i$ for $i=1,\dots,n$, we write $(x_1,x_2,\dots,x_n) \in_S R$ if the morphism $( x_1,x_2,\dots,x_n):S \rightarrow X_1 \times \cdots \times X_n$ factors through $r$. For two internal relations $R,R'$ between objects $X_1,...,X_n$, we write $R \leqslant R'$ if the representing monomorphism of $R$ factors through the representing monomorphism of $R'$. With respect to this notation, we can reformulate familiar properties of relations internal to categories. For example an internal binary relation $R$ on an object $X$ in $\C$ is transitive if for any $x,y,z:S \rightarrow X$ we have $x R y$ and $y R z$ implies $x R z$. In a similar way we can reformulate the property of a binary relation to be reflexive, symmetric or an equivalence, internal to categories. Moreover, we can also reformulate various shifting properties, such as the triangular lemma, in general categories (see Definition~\ref{def:triangle-lemma} below). If $\C$ has pullbacks, then the intersection of two relations can be defined. For example, if $R$ and $T$ are binary relations represented by the monomorphisms $r=(r_1,r_2):R_0 \rightarrow X \times Y$ and $t=(t_1,t_2):T_0 \rightarrow X \times Y$ respectively, then their intersection $R \cap T$ is represented by the diagonal morphism in any pullback
\[
\xymatrix{
(R\cap T)_0 \ar[r] \ar[d] \ar[rd] & T_0  \ar[d]^{t}\\
R_0 \ar[r]_r& X \times Y
}
\]
since the diagonal morphism in a pullback as above is always a monomorphism. This relation is then easily seen to posses the property $(x,y) \in_S R \cap T$ if and only if $(x,y) \in_S R$ and $(x,y)\in_S T$ for any $x:S \rightarrow X$ and $y:S \rightarrow Y$. 
\begin{remark}
This technique of working ``set-theoretically'' within abstract categories is the standard one of working with ``generalized elements'', which is formally validated by the Yoneda embedding $Y: \C \rightarrow \Set ^{\C^{\op}}$. 
\end{remark}
\begin{definition}\label{def:triangle-lemma}
	Let $\C$ be a category with binary products. An object $X$ in a category $\C$ satisfies the \emph{triangular lemma} if for any three (internal) equivalence relations $R, S, T$ on the same object $X$ such that $R \cap S\leqslant T$, if $xRySz$ and $x T z$, then $yTz$, for any morphisms $x,y,z:S \rightarrow X$. Then $\C$ is said to satisfy the triangular lemma if every object in $\C$ does. 
\end{definition}
\noindent
Given a variety $\V$, by making use of the free algebra over one generator in  $\V$ it is an easy exercise to show that $\V$ satisfies the triangular lemma in the sense of Definition~\ref{def:triangle-lemma} if and only if it satisfies the triangular lemma in the universal algebraic sense. Any congruence distributive variety satisfies the triangular lemma, but not every variety satisfying the triangular lemma is congruence distributive as shown in \cite{Chjada2007}. However, a congruence modular variety is congruence distributive if and only if it satisfies the triangular lemma \cite{Chjadaetal2003}. But also, any regular \emph{majority category} in the sense of \cite{Hoe18a} satisfies the triangular lemma in the sense of Definition~\ref{def:triangle-lemma} (see Lemma~1.1 and the paragraph preceding it in \cite{GranRodeloNgeufue2019}). Thus for example, the dual category $\Top^{\op}$ is a regular majority category (Example~2.6 in \cite{Hoe18a}), and hence it satisfies the triangular lemma. 

\begin{description}
\item[Convention]  We will sometimes denote zero morphisms between objects in a pointed category simply by $0$, leaving out the domain and codomain reference as subscripts.
\end{description}
\section{Anticommutative categories} \label{sec:anticommutativity}

Recall that two morphisms $f\colon A \rightarrow C$ and $g\colon B \rightarrow C$ in a pointed category $\C$ with binary products are said to \emph{commute} \cite{Huq1968}  (or ``cooperate''  \cite{Bou02}) if there exists a morphism $\rho\colon A \times B \rightarrow C$ such that $\rho \circ \iota_1 = f$ and $\rho \circ \iota_2 = g$, where $\iota_1\colon A \rightarrow A \times B$ and $\iota_2\colon  B \rightarrow A \times B$ are the canonical product inclusions. Following the terminology of \cite{Bou02}, we will call such a morphism $\rho$ a \emph{cooperator} for $f$ and $g$ in what follows.
\[
\xymatrix{
A \ar[rd]_{f} \ar[r]^-{\iota_1} & A \times B \ar@{..>}[d]_-\rho & B \ar[l]_-{\iota_2} \ar[ld]^{g} \\
& C & 
}
\]
A morphism $f\colon  X \rightarrow Y$ in $\C$ is called \emph{central} when it commutes with the identity $1_Y$ on $Y$, and an object $M$ is called \emph{commutative} if $1_M$ is central. 
For example, in the category $\Grp$ of groups, two morphisms $f\colon G \rightarrow L$ and $g\colon H \rightarrow L$ commute if and only the subgroups $f(G)$ and $g(H)$ commute in the usual sense. The category $\Imp$ of (non-empty) \emph{implications algebras} \cite{Nem65} is pointed, and if two morphisms $f:A \rightarrow C$ and $g:B \rightarrow C$ commute then $f(A) \cap g(B) = 0$. In other words, if two morphisms $f$ and $g$ of implication algebras commute, then they are \emph{disjoint} in the following sense: two morphisms $f\colon X \rightarrow Z$ and $g\colon Y \rightarrow Z$ in a pointed category $\C$ are said to be disjoint if for any commutative diagram
\[
\xymatrix{
	S \ar[r]^-{x} \ar[d]_{y}  & Y \ar[d]^g \\
	X \ar[r]_f & Z
}
\] 
we have $g\circ x = 0 =  f\circ y$. This brings us to the main definition of this paper: 
\begin{definition} \label{def:anticommutative}
	A pointed category $\C$ with binary products is a called \emph{anticommutative} if every pair of commuting morphisms are disjoint.
\end{definition}

\begin{proposition} \label{prop:reformulation-kernels-and-pullbacks}
	Let $\C$ be a pointed category with binary products and kernels, and suppose that $f\colon X \rightarrow Z$ and $g\colon  Y \rightarrow Z$ are any pair of morphisms in $\C$. Then $f$ and $g$ are disjoint if and only if the diagram
	\[
	\xymatrix{
		\ker(f) \times \ker(g) \ar[d]_{p_1} \ar[r]^-{p_2} & Y \ar[d]^g \\
		X \ar[r]_f & Z
	}
	\]
	is a pullback, where $p_1$ and $p_2$ are the canonical product projections composed with the canonical kernel inclusions.
\end{proposition}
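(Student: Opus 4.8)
The plan is to verify the universal property of a pullback directly, so that the argument does not presuppose that a pullback $X\times_Z Y$ exists as an object in its own right. Write $k_f\colon\ker(f)\to X$ and $k_g\colon\ker(g)\to Y$ for the kernel inclusions and $\pi_1,\pi_2$ for the product projections of $\ker(f)\times\ker(g)$, so that $p_1=k_f\circ\pi_1$ and $p_2=k_g\circ\pi_2$. First I would note that the square actually commutes: $f\circ p_1=f\circ k_f\circ\pi_1$ and $g\circ p_2=g\circ k_g\circ\pi_2$ are both the zero morphism $\ker(f)\times\ker(g)\to Z$, since $f\circ k_f=0$ and $g\circ k_g=0$. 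Hence asking whether it is a pullback makes sense, and the only categorical facts the proof will use are that kernels are monomorphisms and that the pair $(\pi_1,\pi_2)$ is jointly monomorphic.

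For the implication ``pullback $\Rightarrow$ disjoint'', suppose the square is a pullback and take a commutative square $f\circ y=g\circ x$ with $x\colon S\to Y$ and $y\colon S\to X$. The universal property supplies $h\colon S\to\ker(f)\times\ker(g)$ with $p_1\circ h=y$ and $p_2\circ h=x$; then $f\circ y=f\circ k_f\circ\pi_1\circ h=0$ and, symmetrically, $g\circ x=0$, which is precisely disjointness of $f$ and $g$.

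For the converse, assume $f$ and $g$ are disjoint and check the universal property. Given $x\colon S\to Y$ and $y\colon S\to X$ with $f\circ y=g\circ x$, disjointness applied to this very square gives $f\circ y=0$ and $g\circ x=0$; hence $y$ factors uniquely as $y=k_f\circ y'$ and $x$ factors uniquely as $x=k_g\circ x'$ through the respective kernels, and the morphism $(y',x')\colon S\to\ker(f)\times\ker(g)$ satisfies $p_1\circ(y',x')=y$ and $p_2\circ(y',x')=x$. This gives existence of the factorization; for uniqueness, if $h$ and $h'$ both satisfy the two equations then $k_f\circ\pi_1\circ h=k_f\circ\pi_1\circ h'$, so $\pi_1\circ h=\pi_1\circ h'$ because $k_f$ is monic, and likewise $\pi_2\circ h=\pi_2\circ h'$, whence $h=h'$ by the universal property of the product. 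Therefore the square is a pullback.

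I do not anticipate a genuine obstacle: this is a diagram chase whose only delicate point is to phrase everything through the universal property rather than through an assumed pullback object. If one does assume pullbacks exist, the same computations show that the canonical comparison morphism $\ker(f)\times\ker(g)\to X\times_Z Y$ is always a monomorphism, with explicit two-sided inverse constructed exactly as above precisely when $f$ and $g$ are disjoint; so the statement may equivalently be read as ``this comparison morphism is an isomorphism if and only if $f$ and $g$ are disjoint''.
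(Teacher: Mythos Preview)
Your argument is correct and follows the same route as the paper's proof: in each direction one uses that $f\circ y=g\circ x$ forces both composites to be zero (by disjointness in one direction, by the pullback factorization through the kernels in the other), and then the universal properties of the kernels and the product do the rest. Your write-up is in fact more careful than the paper's, which omits the verification that the square commutes and the uniqueness clause of the pullback property; the paper also contains a small slip (writing ``anticommutativity'' where ``disjointness'' is meant), which you avoid.
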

\begin{proof}
	If $f$ and $g$ are disjoint, and $x\colon S \rightarrow X$ and $y\colon S \rightarrow Y$ are any morphisms such that $f \circ x = g \circ y$, then anticommutativity implies that $f\circ x = 0 = g\circ y$ so that $x$ and $y$ factor through $\ker(f)$ and $\ker(g)$ respectively, and hence $(x,y)$ factors through $\ker(f) \times \ker(g)$. Conversely, if the diagram above is a pullback, then $f\circ x = g\circ y$ gives a factorization of both $x$ and $y$ through $\ker(f)$ and $\ker(g)$ respectively, so that $f\circ x = 0 = g\circ y$. 
\end{proof}
\begin{remark} \label{rem:pointed-majority-category-anticommutative}
	The content of the above proposition was essentially what was used in \cite{Hoe18a} in order to show that every pointed finitely complete majority category is anticommutative (see Proposition~3.8). 
\end{remark}
\noindent
Recall that a category $\C$ is said to be \emph{regular} \cite{BGO71} it satisfies the following three properties: 
\begin{enumerate}[(i)]
    \item $\C$ has all finite limits.
    \item $\C$ has coequalizers of kernel-pairs. 
    \item The class of all regular epimorphisms in $\C$ is pullback stable, i.e., for any pullback diagram 
    \[
    \xymatrix{
    \bullet \ar[r] \ar[d]_p & \bullet \ar[d]^e\\
    \bullet \ar[r] & \bullet 
    }
    \]
    if $e$ is a regular epimorphism, then so is $p$. 
\end{enumerate}
For example, any (quasi)variety of algebras is a regular category. Every morphism $f:X \rightarrow Y$ in a regular category $\C$ admits a factorization $f = m \circ e$ where $e:X \rightarrow I$ is a regular epimorphism and $m$ is a monomorphism.  Such a factorization of $f$ is called an \emph{image-factorization} of $f$ and is unique up to unique isomorphism. If $\C$ is a variety of algebras, then the usual projection $X \rightarrow f(X)$ followed by the inclusion $f(X) \rightarrow Y$ yields an image factorization for $f$. In what follows, we will write $X \xrightarrow{e_f} f(X) \xrightarrow{m_f}Y$ for a chosen image factorization of a morphism $f:X \rightarrow Y$ in a regular category. 
\begin{proposition} \label{prop:regular-characterization}
	For any pointed regular category $\C$ and any morphisms $f\colon X \rightarrow Z$ and $g\colon Y \rightarrow Z$ in $\C$, $f$ and $g$ are disjoint if and only if $m_f$ and $m_g$ are disjoint.
\end{proposition}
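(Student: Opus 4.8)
The plan is to prove the two implications separately: the ``if'' direction is immediate from the mere existence of image factorizations, while the ``only if'' direction is where the regularity hypothesis does all the work.

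For the ``if'' direction, suppose $m_f$ and $m_g$ are disjoint and take any generalized elements $u\colon S \to X$ and $v\colon S \to Y$ with $f \circ u = g \circ v$. Since $f = m_f \circ e_f$ and $g = m_g \circ e_g$, the span $(e_f \circ u, e_g \circ v)$ from $S$ into $f(X)$ and $g(Y)$ satisfies $m_f \circ (e_f \circ u) = m_g \circ (e_g \circ v)$, so disjointness of $m_f$ and $m_g$ gives $m_f \circ e_f \circ u = 0 = m_g \circ e_g \circ v$, that is, $f \circ u = 0 = g \circ v$. No regularity is used here.

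For the ``only if'' direction, suppose $f$ and $g$ are disjoint and take $u\colon S \to f(X)$ and $v\colon S \to g(Y)$ with $m_f \circ u = m_g \circ v$; the goal is $m_f \circ u = 0 = m_g \circ v$. The obstacle is that $u$ and $v$ need not factor through the regular epimorphisms $e_f$ and $e_g$, so one cannot feed them directly into the disjointness of $f$ and $g$. I would resolve this by two successive pullbacks. First pull back $e_f$ along $u$ to obtain $\pi_1\colon P \to X$ and a regular epimorphism $\pi_2\colon P \to S$ (pullback-stability of regular epimorphisms) with $e_f \circ \pi_1 = u \circ \pi_2$; then pull back $e_g$ along $v \circ \pi_2$ to obtain $\rho_1\colon Q \to Y$ and a regular epimorphism $\rho_2\colon Q \to P$ with $e_g \circ \rho_1 = v \circ \pi_2 \circ \rho_2$. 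Using the hypothesis $m_f \circ u = m_g \circ v$ one computes $f \circ \pi_1 \circ \rho_2 = m_f \circ e_f \circ \pi_1 \circ \rho_2 = m_f \circ u \circ \pi_2 \circ \rho_2 = m_g \circ v \circ \pi_2 \circ \rho_2 = m_g \circ e_g \circ \rho_1 = g \circ \rho_1$, so the span $(\pi_1 \circ \rho_2, \rho_1)$ from $Q$ into $X$ and $Y$ sits in a commutative square over $Z$, and disjointness of $f$ and $g$ forces $f \circ \pi_1 \circ \rho_2 = 0$, i.e. $m_f \circ u \circ (\pi_2 \circ \rho_2) = 0$. Since $\pi_2 \circ \rho_2$ is a composite of pullbacks of regular epimorphisms it is an epimorphism, so cancelling it yields $m_f \circ u = 0$, and then $m_g \circ v = m_f \circ u = 0$ as well. (Equivalently one can run a single pullback of $e_f \times e_g$ along $(u,v)$, invoking that products and composites of regular epimorphisms are regular epimorphisms in a regular category.) The only nontrivial ingredient is this appeal to pullback-stability of regular epimorphisms in order to lift generalized elements from the images back to $X$ and $Y$; everything else is a routine diagram chase.
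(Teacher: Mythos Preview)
Your proof is correct. Both directions are valid: the ``if'' direction is an immediate unwinding of the definition, and the ``only if'' direction correctly uses pullback-stability of regular epimorphisms to lift generalized elements of $f(X)$ and $g(Y)$ back to $X$ and $Y$, then cancels the resulting epimorphism.

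The paper takes a somewhat different, more global route. Rather than chasing individual generalized elements, it builds a single $3\times 3$ diagram of iterated pullbacks over the image factorizations of $f$ and $g$, so that the outer pullback is the pullback of $f$ along $g$ and the inner corner $I$ is the pullback of $m_f$ along $m_g$. It then invokes Proposition~\ref{prop:reformulation-kernels-and-pullbacks} (and the remark that two monomorphisms are disjoint iff their pullback is a zero object): for the forward direction, the diagonal $P\to I$ is a regular epimorphism and the composite $P\to Z$ vanishes by disjointness of $f,g$, forcing $I\cong 0$; for the backward direction, $I\cong 0$ forces the outer pullback to be $\ker(f)\times\ker(g)$. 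Your argument avoids the appeal to Proposition~\ref{prop:reformulation-kernels-and-pullbacks} altogether and is arguably more elementary, especially for the ``if'' direction where you observe that no regularity is needed; the paper's version, on the other hand, packages both directions into a single diagram and makes the role of the intersection $f(X)\cap g(Y)$ explicit, which is what is used later (e.g.\ in Corollary~\ref{cor:triv-moinoid-characterization}).
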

\noindent
Note that according to Proposition~\ref{prop:reformulation-kernels-and-pullbacks}, two monomorphisms in a pointed category with binary products and kernels are disjoint if and only if their pullback is a zero-object.
\begin{proof}
	Consider the diagram below where the bottom row and right column are image factorizations of $f$ and $g$ respectively and each square is a pullback.
	\[
	\xymatrix{
		P \ar@{->>}[rd]^e \ar@{->>}[r]^{p_2} \ar@{->>}[d]_{p_1} & K_2 \ar@{->>}[d] \ar[r]^{k_2} & Y \ar@{->>}[d]^{e_g}\\
		K_1 \ar[d]_{k_1} \ar@{->>}[r] & I \ar[rd]^{ m }\ar[d] \ar[r] & g(Y) \ar[d]^{m_g} \\
		X \ar@{->>}[r]_{e_f} & f(X) \ar[r]_{m_f} & Z
	}
	\]
	In the above diagram the diagonal morphism $e$ is a regular epimorphism (as it is a composite of regular epimorphisms in a regular category) and the morphism $m$ is a monomorphism. Therefore, if $f$ and $g$ are disjoint then $m \circ e = 0$ which implies that $m = 0$ so that $m_f$ and $m_g$ are disjoint. On the other hand, if $m_f$ and $m_g$ are disjoint then $I$ is a zero object, and hence each square being a pullback implies that $K_1 = \ker(f)$ and $K_2 = \ker(g)$ and that $P = \ker(f) \times \ker(g)$, $p_1$ and $p_2$ are the canonical product projections, $k_1$ and $k_2$ are the kernel inclusions, so that $f$ and $g$ are disjoint by Proposition~\ref{prop:reformulation-kernels-and-pullbacks}. 
\end{proof}
\noindent
\begin{proposition} \label{prop:internal-monoids-trivial}
	For a pointed category $\C$ with binary products, we have $(i) \implies (ii) \implies (iii)$ where:
	\begin{enumerate}[(i)]
		\item $\C$ is anticommutative.
		\item Every central morphism in $\C$ is a zero morphism. 
		\item Every commutative object in $\C$ is a zero object.
	\end{enumerate}
\end{proposition}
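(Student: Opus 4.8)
The plan is to prove both implications by directly unwinding the definitions of \emph{central morphism}, \emph{commutative object}, and \emph{disjointness}, with no appeal to the regular-category machinery.

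\textbf{$(i)\Rightarrow(ii)$.} Let $f\colon X\rightarrow Y$ be a central morphism. By definition this means $f$ commutes with $1_Y$, so anticommutativity of $\C$ gives that $f$ and $1_Y\colon Y\rightarrow Y$ are disjoint. To exploit disjointness I would instantiate its defining square (with codomain $Z=Y$, the morphism $Y\to Z$ taken to be $1_Y$ and the morphism $X\to Z$ taken to be $f$) at the test object $S=X$, choosing $y=1_X\colon X\rightarrow X$ for the left-hand vertical and $x=f\colon X\rightarrow Y$ for the top horizontal. This square commutes since $1_Y\circ f = f = f\circ 1_X$, and disjointness of $f$ and $1_Y$ then forces $1_Y\circ f=0$, i.e.\ $f=0$, so $f$ is a zero morphism.

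\textbf{$(ii)\Rightarrow(iii)$.} Let $M$ be a commutative object, i.e.\ $1_M$ is central. By $(ii)$ we obtain $1_M=0$, the zero endomorphism of $M$, which by definition factors through the zero object as $M\xrightarrow{\tau}0\xrightarrow{\sigma}M$. Hence $M$ is a retract of $0$. I would finish with the standard observation that a retract of the zero object is a zero object: $\tau\circ\sigma$ is an endomorphism of $0$ and therefore equals $1_0$ by terminality (or initiality) of $0$, while $\sigma\circ\tau=1_M$; thus $\tau$ and $\sigma$ are mutually inverse isomorphisms, $M\cong 0$, and $M$ is a zero object.

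Both implications are short, essentially bookkeeping arguments. The only points that require a little care are the choice of the test square in $(i)\Rightarrow(ii)$ — one should feed disjointness the commutative square built from $1_X$ rather than trying to use a cooperator for $f$ and $1_Y$ directly — and the appeal in $(ii)\Rightarrow(iii)$ to the fact that a retract of the zero object is again a zero object. I do not anticipate a genuine obstacle in either step.
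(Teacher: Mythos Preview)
Your proof is correct and follows essentially the same approach as the paper's own proof, which simply notes that a central $f$ commutes with $1_Y$, hence is disjoint from $1_Y$, hence $f=0$, and declares $(ii)\Rightarrow(iii)$ trivial. You have merely spelled out the test square for disjointness and the ``retract of zero is zero'' argument that the paper leaves implicit.
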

\begin{proof}
	For $(i) \implies (ii)$, note that if $f\colon X \rightarrow Y$ is central, then it commutes with the identity $1_Y$, so that $f$ and $1_Y$ are disjoint, so that $f = 0$. Note that $(ii) \implies (iii)$ is trivial. 
\end{proof}
\noindent
A pointed finitely complete category $\C$ is called \emph{unital} \cite{Bou96} if for any commutative diagram 
\[
\xymatrix{
& \bullet \ar[d]^m & \\
A \ar[r]_-{\iota_1} \ar[ru] & A \times B & B \ar[l]^-{\iota_2} \ar[lu]
}
\]
where $\iota_1$ and $\iota_2$ are the canonical product inclusions, if $m$ is a monomorphism then $m$ is an isomorphism. This is equivalent to requirement that the product inclusions $\iota_1$ and $\iota_2$ are \emph{jointly strongly epimorphic}.  A variety is then unital in the above sense if and only if it is a J\'onsson-Tarski variety. The corollary below shows that the converse implications in the statement of Proposition~\ref{prop:internal-monoids-trivial} hold when the base category is regular and unital. 
\begin{corollary} \label{cor:triv-moinoid-characterization}
	For a regular unital category $\C$ the following are equivalent: 
	\begin{enumerate}[(i)]
		\item $\C$ is anticommutative.
		\item Every central morphism in $\C$ is $0$. 
		\item Every commutative object in $\C$ is trivial.
	\end{enumerate}
\end{corollary}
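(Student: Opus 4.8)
The plan is to close the cycle $(i)\Rightarrow(ii)\Rightarrow(iii)\Rightarrow(i)$. The first two implications are exactly Proposition~\ref{prop:internal-monoids-trivial} (a regular category has all finite limits, so the hypotheses there are met), so everything reduces to proving $(iii)\Rightarrow(i)$. Thus I would assume that every commutative object of $\C$ is trivial, take a pair of commuting morphisms $f\colon X\to Z$, $g\colon Y\to Z$ with cooperator $\rho\colon X\times Y\to Z$, and fix image factorisations $f = m_f e_f$, $g = m_g e_g$. By Proposition~\ref{prop:regular-characterization} it suffices to show that $m_f$ and $m_g$ are disjoint, and the strategy is: (a) show that $m_f$ and $m_g$ commute; (b) show that the intersection $f(X)\cap g(Y)$ --- that is, the pullback of $m_f$ along $m_g$ --- is a commutative object, hence trivial by $(iii)$; (c) deduce disjointness from Proposition~\ref{prop:reformulation-kernels-and-pullbacks}.

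For (a) I would factor $\rho$ through the regular epimorphism $e_f\times e_g\colon X\times Y\twoheadrightarrow f(X)\times g(Y)$ (a product of regular epimorphisms, hence a regular epimorphism since $\C$ is regular); the induced map $\sigma\colon f(X)\times g(Y)\to Z$ then satisfies $\sigma\iota_1 = m_f$ and $\sigma\iota_2 = m_g$ after cancelling the epimorphisms $e_f,e_g$, and is therefore a cooperator for $m_f$ and $m_g$. Since $e_f\times e_g$ is the coequaliser of its kernel pair, and that kernel pair is the relational composite of the two equivalence relations $\Eq(e_f)\times\Delta_Y$ and $\Delta_X\times\Eq(e_g)$ on $X\times Y$ (with $\Delta$ the discrete relation), it is enough to check that $\rho$ coequalises each of these. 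For $\Eq(e_f)\times\Delta_Y$ this amounts to the two composites $\Eq(e_f)\times Y\rightrightarrows X\times Y\xrightarrow{\rho}Z$ being equal; precomposing with the two product inclusions $\Eq(e_f)\to\Eq(e_f)\times Y$ and $Y\to\Eq(e_f)\times Y$, the first case reduces to the statement that $f$ coequalises $\Eq(e_f)$ (true, as $f=m_f e_f$) and the second to the tautology $\rho\iota_2=g$; since in a unital category the product inclusions are jointly (strongly) epimorphic, the two composites coincide. The case of $\Delta_X\times\Eq(e_g)$ is symmetric.

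For (b), writing $a=m_f$, $b=m_g$ and $D=f(X)\cap g(Y)$ with monomorphisms $d_1\colon D\to f(X)$, $d_2\colon D\to g(Y)$ and $d=ad_1=bd_2$, I would consider $\psi:=\sigma\circ(d_1\times d_2)\colon D\times D\to Z$. One computes $\psi\iota_1=ad_1=d=bd_2=\psi\iota_2$, so each of $\psi\iota_1,\psi\iota_2$ factors both through $a$ and through $b$; using unitality again (pull $a$, resp.\ $b$, back along $\psi$ and observe that both product inclusions $D\to D\times D$ factor through the pulled-back subobject) one gets that $\psi$ itself factors through $a$ and through $b$, hence through $d=a\wedge b$. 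This yields $\phi\colon D\times D\to D$ with $d\phi=\psi$, and cancelling the monomorphism $d$ gives $\phi\iota_1=1_D=\phi\iota_2$; thus $1_D$ is central and $D$ is a commutative object, so $D$ is trivial by $(iii)$. For (c): $D$ is a zero object and is the pullback of the monomorphisms $m_f$ and $m_g$ (both of which have zero kernel), so Proposition~\ref{prop:reformulation-kernels-and-pullbacks} gives that $m_f$ and $m_g$ are disjoint, whence Proposition~\ref{prop:regular-characterization} gives that $f$ and $g$ are disjoint; as the pair was arbitrary, $\C$ is anticommutative.

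The main obstacle is step (a), which is precisely where regularity and unitality must be combined. Regularity alone gives no reason for an arbitrary morphism out of $X\times Y$ to descend along $e_f\times e_g$, and unitality alone says nothing about quotients; the point is that the kernel pair of $e_f\times e_g$ is generated by the two coordinate-wise kernel pairs, which lets the joint epimorphy of the product inclusions of $\Eq(e_f)\times Y$ (resp.\ $X\times\Eq(e_g)$) force $\rho$ to be constant on each of them. Once (a) is available, (b) and (c) are short: they are essentially the mono-case of the statement together with the remark following Proposition~\ref{prop:regular-characterization}.
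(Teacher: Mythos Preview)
Your argument is correct and follows the same route as the paper: establish $(iii)\Rightarrow(i)$ by showing that the images $m_f,m_g$ commute, that their intersection is a commutative object (hence trivial), and then invoke Proposition~\ref{prop:regular-characterization}. The paper compresses steps (a) and (b) into a single sentence, asserting that unitality makes the images commute and citing Example~1.4.2 of \cite{BorceuxBourn} for the fact that $f(X)\cap g(Y)$ is commutative; you have unpacked both of these. One small point worth making explicit in (a): you use that the product inclusions of $\Eq(e_f)\times Y$ are jointly \emph{epimorphic}, whereas unitality only literally gives jointly \emph{strongly} epimorphic---but in the presence of equalizers the latter implies the former (factor through the equalizer of the two composites), so this is harmless.
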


\begin{proof}
The implications $(i) \implies (ii) \implies (iii)$ follow from Corollary~\ref{prop:internal-monoids-trivial}.
	For $(iii) \implies (i)$, we note that if $f\colon X \rightarrow Z$ and $g\colon Y \rightarrow Z$ are two morphisms which commute, then $\C$ being unital, their images $f(X)$ and $g(Y)$ commute. Their intersection $f(X) \cap g(Y)$ is a commutative object in $\C$ (see Example~1.4.2 in \cite{BorceuxBourn}), and hence, $f(X) \cap g(Y) = 0$ so that $f$ and $g$ are disjoint by Proposition~\ref{prop:regular-characterization}. 
\end{proof}
If $\C$ is not regular, then $(ii)$ need not imply $(i)$. For example, consider the full subcategory $\CGrp$ of $\Grp$ consisting \emph{centerless groups}, i.e., groups $G$ for which $\mathrm{Z}(G) = \{x \in G \mid \forall y \in G (xy = yx)\} = 0$. The category $\CGrp$ has products, is unital (product inclusions are jointly strongly epimorphic) and every central morphism in $\CGrp$ is a zero morphism, however it is not anticommutative. To see this, consider the free group $F$ over two generators $x,y$, and consider the morphism $f\colon F \rightarrow F$ such that $f(x) = x = f(y)$. Note that $F$ is centerless, so that $f$ is a morphism in $\CGrp$. Moreover, $f$ commutes with itself, since the map $\rho\colon F^2 \rightarrow F$ defined by $(a,b) \mapsto f(a)\cdot f(b)$ is a cooperator for $f$ with itself, but $f$ is not disjoint with itself.

\begin{proposition} \label{prop:characterization-one}
	For a pointed category $\C$ with binary products the following are equivalent. 
	\begin{enumerate}[(i)]
		\item For any morphism $\rho\colon  X \times X \rightarrow Y$ if $\rho \circ \iota_1 = \rho \circ \iota_2$ then $\rho \circ \iota_1 = 0 = \rho \circ \iota_2$.
		\item For any morphism $\rho\colon X \times X \rightarrow Y$ we have $\rho \circ (x,0) = \rho \circ (0,x) \implies \rho \circ (x,0) = 0 = \rho \circ (0, x)$ for any morphism $x\colon S \rightarrow X$. 
		\item For any morphism $\rho\colon  X \times Y \rightarrow Z$ we have $\rho\circ (x,0) = \rho \circ (0,y) \implies \rho \circ (x,0) = 0  = \rho\circ (0,y)$ for any morphisms $x\colon S \rightarrow X$ and $y\colon S \rightarrow Y$.
		\item $\C$ is anticommutative. 
	\end{enumerate}
\end{proposition}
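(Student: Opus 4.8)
The plan is to establish the cycle of implications $(i) \Rightarrow (iv) \Rightarrow (iii) \Rightarrow (ii) \Rightarrow (i)$. Of these, $(iv) \Rightarrow (iii)$, $(iii) \Rightarrow (ii)$ and $(ii) \Rightarrow (i)$ are formal manipulations with product inclusions and zero morphisms; the only step carrying an actual idea is $(i) \Rightarrow (iv)$.

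For $(iv) \Rightarrow (iii)$, I would start from an arbitrary $\rho \colon X \times Y \to Z$ and set $f = \rho \circ \iota_1 \colon X \to Z$ and $g = \rho \circ \iota_2 \colon Y \to Z$. By construction $\rho$ is a cooperator for $f$ and $g$, so anticommutativity gives that $f$ and $g$ are disjoint. Given $x \colon S \to X$ and $y \colon S \to Y$ with $\rho \circ (x,0) = \rho \circ (0,y)$, I would observe that $(x,0) = \iota_1 \circ x$ and $(0,y) = \iota_2 \circ y$, so this equation reads $f \circ x = g \circ y$; feeding the resulting commutative square into the definition of disjointness yields $f \circ x = 0 = g \circ y$, that is, $\rho \circ (x,0) = 0 = \rho \circ (0,y)$. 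The implication $(iii) \Rightarrow (ii)$ is just the special case $Y = X$ with the same test map used on both sides, and $(ii) \Rightarrow (i)$ follows by taking $S = X$ and $x = 1_X$, since then $(x,0) = \iota_1$ and $(0,x) = \iota_2$.

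For $(i) \Rightarrow (iv)$, suppose $f \colon X \to Z$ and $g \colon Y \to Z$ commute via a cooperator $\rho \colon X \times Y \to Z$, and suppose $x \colon S \to Y$, $y \colon S \to X$ satisfy $f \circ y = g \circ x$. The device is to form $\tau := \rho \circ (y \times x) \colon S \times S \to Z$, turning the ``heterogeneous'' product $X \times Y$ together with the test object $S$ into a single square $S \times S$ to which $(i)$ applies. Using $(y \times x) \circ \iota_1 = \iota_1 \circ y$ and $(y \times x) \circ \iota_2 = \iota_2 \circ x$ (where the $\iota_i$ on the left are the inclusions of $S \times S$ and on the right those of $X \times Y$), one computes $\tau \circ \iota_1 = f \circ y$ and $\tau \circ \iota_2 = g \circ x$. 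These agree by hypothesis, so $(i)$ applied to $\tau$ forces $\tau \circ \iota_1 = 0 = \tau \circ \iota_2$, i.e.\ $f \circ y = 0 = g \circ x$; this is exactly what disjointness of $f$ and $g$ demands.

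The main (indeed essentially the only) obstacle is spotting the reduction $\tau = \rho \circ (y \times x)$ in $(i) \Rightarrow (iv)$: condition $(i)$ is stated only for morphisms out of a ``diagonal'' square $W \times W$, whereas a general cooperator lives on $X \times Y$ and is probed by an arbitrary $S$, so one needs a way to manufacture an honest square. Everything after that identification is routine bookkeeping with the naturality of the product inclusions and the absorption properties of zero morphisms.
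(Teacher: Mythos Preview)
Your proof is correct, but it runs the cycle in the opposite direction from the paper and, more interestingly, collapses two of the paper's steps into one.

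The paper proves $(i)\Rightarrow(ii)\Rightarrow(iii)\Rightarrow(iv)\Rightarrow(i)$. Its $(i)\Rightarrow(ii)$ is exactly your precomposition trick, but with the homogeneous map $x\times x\colon S\times S\to X\times X$; its $(ii)\Rightarrow(iii)$ is then a separate, slightly clever step that builds an auxiliary morphism $p\colon (X\times Y)\times(X\times Y)\to Z\times Z$, $((a,b),(c,d))\mapsto(\rho(a,d),\rho(c,b))$, so as to apply $(ii)$ on the square $(X\times Y)\times(X\times Y)$. Your route $(i)\Rightarrow(iv)$ uses the \emph{same} precomposition device but with the heterogeneous map $y\times x\colon S\times S\to X\times Y$, which lands you directly at disjointness and makes the paper's $(ii)\Rightarrow(iii)$ construction unnecessary. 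The remaining implications $(iv)\Rightarrow(iii)\Rightarrow(ii)\Rightarrow(i)$ are, as you say, formal specializations; the paper's $(iii)\Rightarrow(iv)$ and $(iv)\Rightarrow(i)$ match your $(iv)\Rightarrow(iii)$ and $(ii)\Rightarrow(i)$ essentially verbatim. Net effect: your argument is a bit more economical, while the paper's order has the mild expository benefit of exhibiting each condition as a visible strengthening of the previous one.
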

\begin{proof}
	For $(i) \implies (ii)$, let $x\colon S \rightarrow X$ be any morphism such that $\rho\circ (x,0) = \rho\circ (0,x)$. Consider the composite morphism $S \times S \xrightarrow{x\times x} X \times X \xrightarrow{\rho} Y$. Then we have that $(\rho \circ (x \times x)) \circ \iota_1 = \rho \circ (x,0) = \rho \circ (0,x) = (\rho \circ (x\times x)) \circ \iota_2$, which implies that:
	\[
	\rho \circ (x,0) = (\rho \circ (x \times x)) \circ \iota_1 = 0 = (\rho \circ (x\times x)) \circ \iota_2 = \rho \circ (0,x).
	\]
	For $(ii) \implies (iii)$ suppose that $x\colon S \rightarrow X$ and $y\colon  S \rightarrow Y$ are any morphisms such that $\rho \circ (x,0) = \rho\circ (0,y)$. Consider the morphism $(X\times Y) \times (X \times Y) \xrightarrow{p} Z \times Z$ which is defined by the map $((a,b), (c,d)) \longmapsto (\rho (a,d), \rho (c,b))$. Then we have that
	\begin{align*}
	p\circ ((x,y), (0,0)) &= (\rho \circ (x,0), \rho\circ (0,y)) \\
	&= (\rho\circ (0,y), \rho\circ (x,0)) \\
	&= p\circ ((0,0), (x,y)),	    
	\end{align*}
	which implies $p\circ ((x,y), (0,0)) = (\rho \circ (x,0), \rho \circ (0,y)) = (0,0)$ and the result follows. For $(iii) \implies (iv)$ suppose that $\rho$ is a cooperator for two morphisms $f\colon X \rightarrow Z$ and $g\colon  Y \rightarrow Z$, and let $x:S \rightarrow X$ and $y:S \rightarrow Y$ be any two morphisms such that $f \circ x = g \circ y$. Consider the diagram:
	\[
	\xymatrix{
		& X \ar[d]^{\iota_1} \ar[dr]^{f}&  & \\
		S \ar[ur]^x \ar[dr]_y& X \times Y \ar[r]^{\rho} & Z \\
		& Y\ar[u]_{\iota_2} \ar[ru]_{g}& 
	}
	\]
	The commutativity of outer square gives 
	\[
	\rho \circ (x,0) = f\circ x = g \circ y = \rho \circ (0,y) \implies f\circ x = 0 = g \circ y. 
	\]
For $(iv) \implies (i)$, just note that $\rho$ is a cooperator for $\rho \circ \iota_1$ and $\rho \circ \iota_2$
\end{proof}
\noindent
The following proposition is a simple reformulation of $(i)$ in the above proposition when $\C$ has coequalizers.
\begin{proposition} \label{prop:characterization-with-coequalizers}
	A pointed category $\C$ with binary products and coequalizers is anticommutative if and only if for any object $X$ in $\C$ we have $q \circ \iota_1 = 0 = q \circ \iota_2$ for any coequalizer $q: X \times X \rightarrow Q$ of $\iota_1$ and $\iota_2$.  
\end{proposition}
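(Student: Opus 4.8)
The plan is to deduce this directly from the equivalence $(i) \Leftrightarrow (iv)$ in Proposition~\ref{prop:characterization-one}, using nothing beyond the universal property of coequalizers. So the entire argument reduces to showing that, once $\C$ has coequalizers, condition $(i)$ of Proposition~\ref{prop:characterization-one} is equivalent to the displayed condition on the specific coequalizer $q$ of $\iota_1$ and $\iota_2$.

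For the forward implication, I would assume $\C$ is anticommutative and let $q\colon X \times X \rightarrow Q$ be any coequalizer of $\iota_1$ and $\iota_2$. Since $q$ coequalizes $\iota_1$ and $\iota_2$ we have $q \circ \iota_1 = q \circ \iota_2$, so applying condition $(i)$ of Proposition~\ref{prop:characterization-one} with $\rho = q$ yields $q \circ \iota_1 = 0 = q \circ \iota_2$ at once. For the converse, I would assume the coequalizer condition and verify $(i)$. Given any $\rho\colon X \times X \rightarrow Y$ with $\rho \circ \iota_1 = \rho \circ \iota_2$, the morphism $\rho$ coequalizes $\iota_1$ and $\iota_2$, so the universal property of the coequalizer $q$ supplies a (unique) $\bar\rho\colon Q \rightarrow Y$ with $\bar\rho \circ q = \rho$. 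Then $\rho \circ \iota_1 = \bar\rho \circ (q \circ \iota_1) = \bar\rho \circ 0 = 0$, and likewise $\rho \circ \iota_2 = 0$, which is precisely condition $(i)$; hence $\C$ is anticommutative by Proposition~\ref{prop:characterization-one}.

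There is essentially no obstacle in this argument. The only minor point deserving a word is that the statement quantifies over \emph{any} coequalizer $q$: since coequalizers are unique up to a unique isomorphism and composing with an isomorphism does not affect whether the composite with $\iota_i$ is a zero morphism, the condition holds for one such $q$ if and only if it holds for every one, so the phrasing is harmless.
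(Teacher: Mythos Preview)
Your argument is correct and is precisely the ``simple reformulation'' the paper alludes to: the paper does not write out a proof but merely notes that the proposition restates condition~$(i)$ of Proposition~\ref{prop:characterization-one} in the presence of coequalizers, and your two directions unpack that remark exactly via the universal property of the coequalizer.
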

\subsection{Examples of anticommutative categories}
Given any morphism $f:X \rightarrow Y$ in a finitely complete category, the kernel equivalence relation $\Eq(f)$ of $f$ is the equivalence relation obtained by pulling back $f$ along itself. If $\C$ is a variety of algebras, then $\Eq(f)$ is the kernel congruence associated to $f$, i.e., we have: 
\[
\Eq(f) = \{(x,y) \in X \times X \mid f(x) = f(y)\}.
\]
\begin{example} \label{example:anti-commutative}
	If $\V$ is a pointed variety of universal algebras which admits an idempotent binary operation $b(x,y)$ satisfying $b(x,0) = 0 = b(0,y)$, then $\V$ satisfies (ii) of Proposition~\ref{prop:characterization-one}. Since for any homomorphism $f\colon X\times X \rightarrow Y$ in $\V$ if $f(x,0) = f(0,x)$ then $f(x,0) = b(f(x,0), f(x,0))= b(f(x,0), f(0,x)) = f(b(x,0),b(0,x)) = 0$. Thus, the variety of meet-semilattices with a least element is anticommuative. In Theorem~\ref{thm:Mal'tsev-characterization} below, a Mal'tsev condition is given for a pointed variety to be anticommutative.
\end{example}
\noindent
A variety $\V$ is said to have \emph{directly decomposable congruence classes} \cite{Duda1986} if every congruence class $C$ on a product $X \times Y$ in $\V$ is such that $C = \pi_1(C) \times \pi_2(C)$. This property is easily seen to be equivalent to the requirement that every congruence $\Theta$ on any product $X \times Y$ in $\V$, satisfies the implication: $$(x,y)\Theta (x', y') \implies (x',y) \Theta (x',y').$$ Moreover, this may be viewed as the triangular lemma restricted to products:
	\[
	\xymatrix{
		& (x',y') \ar@{-}[d]|{\Eq(\pi_1)} \\
		(x,y)  \ar@{-}[ur]^{\Theta} \ar@{-}[r]_{\Eq(\pi_2)}  & (x',y) \ar@{..}@/_2pc/[u]_{\Theta} 
	}
	\]
\begin{proposition} \label{prop:directly-decomposable-congruence-class-anti-commutative}
Any pointed variety which has directly decomposable congruence classes is anticommutative.
\end{proposition}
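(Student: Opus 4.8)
The plan is to verify Definition~\ref{def:anticommutative} directly, using that in a variety the categorical notion of disjointness can be checked on ordinary elements. So I would start with two morphisms $f\colon X \to Z$ and $g\colon Y \to Z$ of $\V$ that commute, together with a cooperator $\rho\colon X \times Y \to Z$, so that $\rho(a,0) = f(a)$ and $\rho(0,b) = g(b)$ for all $a \in X$, $b \in Y$, where $0$ denotes the unique constant of $\V$. The goal is to show $f$ and $g$ are disjoint, and the first reduction is to the elementwise claim: if $a \in X$ and $b \in Y$ satisfy $f(a) = g(b)$, then $f(a) = 0 = g(b)$. This suffices, since given a commutative square built from $y\colon S \to X$ and $x\colon S \to Y$ with $f \circ y = g \circ x$, one evaluates at each $s \in S$ and applies the elementwise claim to $y(s)$ and $x(s)$ to conclude $f \circ y = 0 = g \circ x$.

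The heart of the argument is then to feed the equation $f(a) = g(b)$ into the hypothesis on $\V$ via the kernel congruence $\Theta := \Eq(\rho)$ on the product $X \times Y$. From $\rho(a,0) = f(a) = g(b) = \rho(0,b)$ we get $(a,0)\,\Theta\,(0,b)$. Since $\V$ has directly decomposable congruence classes, the congruence $\Theta$ on the product $X \times Y$ satisfies the implication $(u,v)\,\Theta\,(u',v') \Rightarrow (u',v)\,\Theta\,(u',v')$ which is equivalent to having directly decomposable congruence classes; applying it with $(u,v) = (a,0)$ and $(u',v') = (0,b)$ gives $(0,0)\,\Theta\,(0,b)$, i.e.\ $\rho(0,0) = \rho(0,b)$. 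As $\rho$ preserves the constant, this reads $0 = g(b)$, hence also $f(a) = g(b) = 0$, which closes the elementwise claim and with it the proof.

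Equivalently, I could instead establish condition $(iii)$ of Proposition~\ref{prop:characterization-one}: for an arbitrary homomorphism $\rho\colon X \times Y \to Z$ with $\rho(x,0) = \rho(0,y)$, the identical kernel-congruence computation gives $\rho(0,y) = 0$ and hence $\rho(x,0) = 0 = \rho(0,y)$. I do not expect a genuine obstacle in either route. The only point needing a word of care is the passage between generalized elements (as in the categorical definition of disjointness, and in condition $(iii)$ of Proposition~\ref{prop:characterization-one}) and ordinary elements --- the standard ``generalized element'' device justified by the Yoneda embedding, which is automatic in a variety. The conceptual crux is just the observation that, under the directly-decomposable implication, the pair $\bigl((a,0),(0,b)\bigr)$ collapses to $\bigl((0,0),(0,b)\bigr)$, exhibiting $g(b)$ as the value of $\rho$ at the zero of $X \times Y$.
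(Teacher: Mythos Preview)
Your proposal is correct and is essentially the same argument as the paper's: apply the directly-decomposable implication to the kernel congruence $\Eq(\rho)$ of the relevant morphism out of a product, turning $(a,0)\,\Theta\,(0,b)$ into $(0,0)\,\Theta\,(0,b)$. The only cosmetic difference is that the paper checks condition~$(ii)$ of Proposition~\ref{prop:characterization-one} (so it works with $\rho\colon X\times X\to Y$ and a single element $x$), whereas you check condition~$(iii)$ or the definition directly; the key step is identical.
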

\begin{proof}
We verify the conditions of  Proposition~\ref{prop:characterization-one} $(ii)$. Suppose that $f\colon X \times X \rightarrow Y$ is any morphism in $\V$ and that $f(x,0) = f(0,x)$,  then $(x,0)\Eq(f)(0,x)$ implies $(0,0)\Eq(f) (0,x)$.
	\[
	\xymatrix{
		& (0,x) \ar@{-}[d]|{\Eq(\pi_1)} \\
		(x,0)  \ar@{-}[ur]^{\Eq(f)} \ar@{-}[r]_{\Eq(\pi_2)}  & (0,0) \ar@{..}@/_2pc/[u]_{\Eq(f)} 
	}
	\]
\end{proof}
\begin{remark} \label{rem:pointed-traingular-lemma-anticommutative}
Essentially the same argument, working categorically may be used to show that any pointed finitely complete category which satisfies the triangular lemma is anticommutative.
\end{remark}
\noindent
The notion of a \emph{majority category} has been recently introduced and studied in \cite{Hoe18a, Hoe18b}. This notion is thought to be a categorical counterpart for varieties which admit a majority term, in a similar way the notion of Mal'tsev category \cite{CarPedPir92} is a categorical counterpart of Mal'tsev varieties. A category $\C$ is a majority category if for any ternary relation $R$ between objects $X,Y,Z$ we have: $(x',y,z) \in_S R$ and $(x,y',z) \in_S R$ and $(x,y,z') \in_S R$ implies $(x,y,z)\in_S R$, for any morphisms $x,x'\colon S \rightarrow X$ and $y,y'\colon S \rightarrow Y$ and $z,z'\colon S \rightarrow Z$ in $\C$. According to Proposition~\ref{prop:reformulation-kernels-and-pullbacks} above, and Proposition~3.8 in \cite{Hoe18a}, it follows that every pointed finitely complete majority category is anticommutative. However, for completeness we include a short proof here: 
\begin{proposition} \label{prop:pointed-majority-anticommutative}
Every pointed finitely complete majority category is anticommutative.
\end{proposition}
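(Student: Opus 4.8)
The plan is to prove the statement directly from the definition of disjointness (Definition~\ref{def:anticommutative}), feeding the majority condition a well-chosen ternary relation built from a cooperator.

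Suppose $f\colon X \to Z$ and $g\colon Y \to Z$ commute, with cooperator $\rho\colon X \times Y \to Z$, so that $\rho \circ \iota_1 = f$ and $\rho \circ \iota_2 = g$. To see that $f$ and $g$ are disjoint, take arbitrary morphisms $a\colon S \to X$ and $b\colon S \to Y$ with $f \circ a = g \circ b$; the goal is to conclude $f \circ a = 0 = g \circ b$. The central step is to consider the internal ternary relation $R$ between $X$, $Y$ and $Z$ represented by the morphism $(\pi_1,\pi_2,\rho)\colon X \times Y \to X \times Y \times Z$; this is a monomorphism because it is split by the projection $X \times Y \times Z \to X \times Y$ onto the first two factors. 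Unravelling the notation, $(u,v,w)\in_S R$ holds precisely when $w = \rho \circ (u,v)$.

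With $R$ in hand, I would record three membership facts, each an immediate consequence of the cooperator identities $\rho\circ\iota_1 = f$, $\rho\circ\iota_2 = g$, the hypothesis $f\circ a = g\circ b$, and the equality $\rho \circ 0 = 0$: first $(a,0,f\circ a)\in_S R$, since $\rho\circ(a,0) = \rho\circ\iota_1\circ a = f\circ a$; second $(0,b,f\circ a)\in_S R$, since $\rho\circ(0,b) = \rho\circ\iota_2\circ b = g\circ b = f\circ a$; and third $(0,0,0)\in_S R$, since $\rho\circ(0,0) = 0$. Now the majority condition, applied to $R$ with the tuple $(0,0,f\circ a)$ in the role of $(x,y,z)$ --- so that the first coordinate is varied to $a$, the second to $b$, and the third to $0$ --- yields $(0,0,f\circ a)\in_S R$, that is, $f\circ a = \rho\circ(0,0) = 0$; and then $g\circ b = f\circ a = 0$ as well, so $f$ and $g$ are disjoint. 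I do not anticipate a genuine obstacle here: all the data used (the product inclusions $\iota_1,\iota_2$, the zero morphisms, the relation $R$) exist since $\C$ is pointed with binary products, and the only verifications needed --- that $(\pi_1,\pi_2,\rho)$ is a monomorphism and that $\rho$ sends the zero morphism $S \to X\times Y$ to the zero morphism $S \to Z$ --- are routine. The entire content of the argument lies in choosing $R$ to be the graph of the cooperator.
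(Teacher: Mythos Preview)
Your proof is correct and follows essentially the same strategy as the paper: build the graph of the cooperator as an internal ternary relation and apply the majority condition to three well-chosen triples. The only difference is packaging: the paper verifies condition~(i) of Proposition~\ref{prop:characterization-one} (so it works with a single morphism $X\times X\to Y$ whose two restrictions along $\iota_1,\iota_2$ coincide, and takes the graph relation on $X\times Y\times X$ with the value in the middle coordinate), whereas you work directly from Definition~\ref{def:anticommutative} with an arbitrary cooperator $\rho\colon X\times Y\to Z$ and place the value in the third coordinate. Mathematically these are the same argument.
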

\begin{proof}
We will show that $(i)$ of Proposition~\ref{prop:characterization-one} holds. Let $f:X\times X \rightarrow Y$ be any morphism with $f\circ \iota_1 = y = f \circ \iota_2$. Let $R$ be the ternary relation defined by the monomorphism $r:R_0 \rightarrow X \times Y \times X$ where $r$ is the equalizer of  $f\circ (\pi_1, \pi_3)$ and $\pi_2$. 
\[
\xymatrix{
R_0 \ar[r]^-r &   X \times Y \times X \ar@<-.8ex>[rr]_-{\pi_2} \ar@<.8ex>[rr]^-{f\circ (\pi_1, \pi_3)} & &Y
}
\]
Then we have 
$(\iota_1,y,0) \in_X R$ and $(0,0,0) \in_X R$ and $(0,y,\iota_2) \in_X R$ which implies  $(0,y,0)\in_X R$ by the majority property. Therefore, $f \circ \iota_1 = y = 0$.
\end{proof}
\noindent
The notion of an \emph{$\M$-coextensive} object \cite{Hoe2020a} in a category $\C$  is an object-wise coextensivity \cite{CLW93} condition relative to a class of morphisms $\M$ in $\C$. Let $\C$ be a pointed category with binary products and coequalizers. When $\M$ is the class of regular epimorphisms in $\C$, then an object $X$ is $\M$-coextensive if and only if for any diagram
\[
\xymatrix{
X_1 \ar[d]_{q_1} & X \ar[l] \ar[d]^{q} \ar[r] \ar[d] & X_2 \ar[d]^{q_1} \\
Q_1 & Q \ar[r] \ar[l] & Q_2 
}
\]
where the top row is a product diagram and the vertical morphisms are regular epimorphisms, then the bottom row is a product diagram if and only if both the squares above are pushouts. For example, given any algebra $X$ in any variety, then $X$ is $\M$-coextensive (where $\M$ is the class of regular epimorphisms in the variety) if and only it has the \emph{Fraser-Horn property} \cite{FraserHorn1970}. We say that $\C$ is regularly-coextensive if every object in $\C$ is $\M$-coextensive with $\M$ the class of all regular epimorphisms in $\C$. 
\begin{proposition} \label{prop:reg-coextensive-anticommutative}
If $\C$ is a pointed category with binary products and coequalizers which is regularly coextensive, then $\C$ is anticommutative.
\end{proposition}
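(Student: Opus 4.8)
The plan is to verify the coequalizer-based criterion of Proposition~\ref{prop:characterization-with-coequalizers}: it suffices to fix an object $X$ and a coequalizer $q\colon X\times X \to Q$ of $\iota_1$ and $\iota_2$, and to show that $q\circ\iota_1 = 0 = q\circ\iota_2$. In fact I would prove the stronger assertion that $q$ itself is a zero morphism.

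The idea is to feed the canonical product diagram $X \xleftarrow{\pi_1} X\times X \xrightarrow{\pi_2} X$ into the definition of regular coextensivity for the object $X\times X$, using $q$ as the regular epimorphism over the middle object. Concretely, I would form the pushout of $q$ along $\pi_1$ and the pushout of $q$ along $\pi_2$; these exist because $q$ is a regular epimorphism and $\C$ has coequalizers (present $q$ as a coequalizer of some pair of morphisms and push that pair forward along the relevant projection). Denote the resulting pushout cocones by $q_1\colon X \to Q_1$, $\tilde q_1\colon Q \to Q_1$ and $q_2\colon X \to Q_2$, $\tilde q_2\colon Q \to Q_2$, so that $q_1\circ\pi_1 = \tilde q_1\circ q$ and $q_2\circ\pi_2 = \tilde q_2\circ q$. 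Being pushouts of the regular epimorphism $q$, the maps $q_1$ and $q_2$ are again regular epimorphisms. Since $X\times X$ is $\M$-coextensive with $\M$ the class of regular epimorphisms, and since the top row is a product diagram, the three vertical morphisms $q$, $q_1$, $q_2$ are regular epimorphisms, and both squares are pushouts by construction, the defining property of $\M$-coextensivity yields that the bottom row $Q_1 \xleftarrow{\tilde q_1} Q \xrightarrow{\tilde q_2} Q_2$ is a product diagram; in particular $\tilde q_1$ and $\tilde q_2$ are jointly monomorphic.

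It then remains only to check that $\tilde q_1\circ q = 0$ and $\tilde q_2\circ q = 0$, since joint monomorphicity of $\tilde q_1$ and $\tilde q_2$ forces $q = 0$ from this. For the first, I would use the pushout identity $q_1\circ\pi_1 = \tilde q_1\circ q$ together with $\pi_1\circ\iota_1 = 1_X$ and $\pi_1\circ\iota_2 = 0$: precomposing with $\iota_1$ gives $q_1 = \tilde q_1\circ q\circ\iota_1$, while precomposing with $\iota_2$ gives $0 = q_1\circ\pi_1\circ\iota_2 = \tilde q_1\circ q\circ\iota_2$. Since $q$ coequalizes $\iota_1$ and $\iota_2$ we have $q\circ\iota_1 = q\circ\iota_2$, so these combine to $q_1 = \tilde q_1\circ q\circ\iota_1 = \tilde q_1\circ q\circ\iota_2 = 0$, and therefore $\tilde q_1\circ q = q_1\circ\pi_1 = 0$. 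The computation of $\tilde q_2\circ q = 0$ is entirely symmetric, with $\pi_2$ and $\iota_2$ in the roles of $\pi_1$ and $\iota_1$. Hence $q = 0$, so $q\circ\iota_1 = 0 = q\circ\iota_2$, and $\C$ is anticommutative.

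The part that needs the most care is the legitimate invocation of $\M$-coextensivity: one must make sure the two pushouts along the projections exist in this setting (which has only binary products and coequalizers, not all finite limits) and that the comparison maps $q_1$ and $q_2$ are regular epimorphisms, so that every hypothesis in the coextensivity diagram is genuinely satisfied. Once the diagram is set up, the collapse of $q$ to $0$ is just the short chase above, using only the relations $\pi_i\circ\iota_j$ and $q\circ\iota_1 = q\circ\iota_2$.
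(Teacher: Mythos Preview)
Your proof is correct and follows essentially the same approach as the paper: both verify the criterion of Proposition~\ref{prop:characterization-with-coequalizers} by forming the pushouts of $q$ along $\pi_1$ and $\pi_2$ (constructed as coequalizers of $\pi_i\circ\iota_1,\ \pi_i\circ\iota_2$) and invoking regular coextensivity. The only cosmetic difference is in the final step: the paper observes directly that these pushouts are coequalizers of $1_X$ with $0$, hence terminal, so $Q$ is terminal as their product; you instead compute $q_1=0=q_2$ and use joint monomorphicity of $\tilde q_1,\tilde q_2$ to force $q=0$, which amounts to the same thing.
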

\begin{proof}
We will show that $\C$ satisfies the conditions of Proposition~\ref{prop:characterization-with-coequalizers}. Let $X$ be any object in $\C$, and suppose that $q:X \times X \rightarrow Q$ is a coequalizer of the product inclusions $\iota_1:X \rightarrow X \times X$ and $\iota_2:X \rightarrow X \times X$. Then the pushout of $q$ along $\pi_1$ and along $\pi_2$ is formed simply by taking a coequalizer of $\pi_1 \circ \iota_1, \pi_1 \circ \iota_2$ and $\pi_2 \circ \iota_1, \pi_2 \circ \iota_2$ respectively. But these two pairs of coequalizers have terminal objects for codomains, and hence $Q$ being their product, is terminal. 
\end{proof}
\subsection{Locally anticommutative categories} \label{section:locally-anticommutative-categories}
For an object $X$ in a category, recall from the introduction that the fibre $\Pt_{\C}(X)$ above $X$ of the fibration of points $ \pi\colon \Pt(\C) \rightarrow \C$ consists of triples $(A,p,s)$ where $p\colon A \rightarrow X$ is a split epimorphism in $\C$ and $s$ is a splitting for $p$. A morphism $f\colon (A,p,s) \rightarrow (B,q,t)$ in $\Pt_{\C}(X)$ is a morphism $f\colon A \rightarrow B$ in $\C$ such that $q \circ f = p$ and $f \circ s = t$. The category $\Pt_{\C}(X)$ is always pointed, where the zero-object is $(X,1_X, 1_X)$, and if $\C$ is finitely complete, then so is $\Pt_{\C}(X)$. 
\begin{definition} \label{def:locally-anticommutativity}
A category $\C$ is \emph{locally anticommutative} if for any object $X$ in $\C$, the category $\Pt_{X}(\C)$ is anticommutative.
\end{definition}
\begin{proposition} \label{prop:points-triangular-lemma-persevation-reflection}
	If $\D$ is any finitely complete category which satisfies the triangular lemma, and $F\colon  \C \rightarrow \D$ is any conservative functor (,i.e., reflects isomorphisms,) which preserves pullbacks and equalizers then $\C$ satisfies the triangular lemma. 
\end{proposition}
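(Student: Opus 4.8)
The plan is to use $F$ to transport the hypothesis of the triangular lemma from $\C$ to $\D$, to apply the triangular lemma in $\D$, and then to reflect the conclusion back along $F$ using conservativity. First I would record two preliminary facts. (1) Since $F$ preserves pullbacks it preserves monomorphisms (a morphism $m\colon M\rightarrow Y$ is monic precisely when $(1_M,1_M)\colon M\rightarrow M\times_Y M$ is an isomorphism, a condition which $F$ preserves and, being conservative, also reflects); moreover, for a monomorphism $m\colon M\rightarrow Y$ and an arbitrary morphism $f\colon W\rightarrow Y$ in $\C$, the statement ``$f$ factors through $m$'' is equivalent to ``the monic pullback projection $W\times_Y M\rightarrow W$ is an isomorphism'', and since $F$ preserves this pullback and reflects isomorphisms, $f$ factors through $m$ in $\C$ if and only if $Ff$ factors through $Fm$ in $\D$. (2) The canonical comparison morphism $c\colon F(X\times X)\rightarrow FX\times FX$ is a monomorphism: identifying $X\times X$ with the pullback $X\times_1 X$, the functor $F$ carries it to $FX\times_{F1}FX$, and $c$ is then the morphism $FX\times_{F1}FX\rightarrow FX\times FX$ induced by the unique morphism from $F1$ to the terminal object of $\D$, which is an equalizer and in particular a monomorphism. (Here I use that $\C$ has finite limits, which holds in the intended applications; note that $F$ is not assumed to preserve the terminal object, hence need not preserve $X\times X$.)

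With these in hand I would argue as follows. Fix an object $X$ of $\C$, internal equivalence relations $R,S,T$ on $X$ represented by monomorphisms $(r_1,r_2)\colon R_0\rightarrow X\times X$, $(s_1,s_2)\colon S_0\rightarrow X\times X$, $(t_1,t_2)\colon T_0\rightarrow X\times X$ with $R\cap S\leqslant T$, and morphisms $x,y,z\colon W\rightarrow X$ with $xRy$, $ySz$ and $xTz$. Composing $F$ with $c$ produces morphisms $\bar r=(Fr_1,Fr_2)\colon FR_0\rightarrow FX\times FX$, and similarly $\bar s$ and $\bar t$; by (1) these are monomorphisms, so they represent internal relations $\bar R,\bar S,\bar T$ on $FX$ in $\D$. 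I would then check that $\bar R,\bar S,\bar T$ are equivalence relations: each of reflexivity, symmetry and transitivity of $R$ is witnessed by a morphism of $\C$ (for transitivity, a morphism $m\colon R_0\times_X R_0\rightarrow R_0$ with $r_1 m=r_1 p_1$ and $r_2 m=r_2 p_2$, where $p_1,p_2$ are the pullback projections), and since $F$ preserves the pullback $R_0\times_X R_0$ and carries the $p_i$ to the projections of $FR_0\times_{FX}FR_0$, the image of that witness works for $\bar R$. Next I would verify $\bar R\cap\bar S\leqslant\bar T$: $F$ carries the pullback $R_0\times_{X\times X}S_0$ representing $R\cap S$ to $FR_0\times_{F(X\times X)}FS_0$, which, because $c$ is monic, coincides with the pullback $FR_0\times_{FX\times FX}FS_0$ representing $\bar R\cap\bar S$, and applying $F$ to the factorization of $R\cap S$ through $t$ yields the factorization of $\bar R\cap\bar S$ through $\bar t$. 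Finally, the morphisms witnessing $xRy$, $ySz$, $xTz$ are carried by $F$ to morphisms witnessing $Fx\,\bar R\,Fy$, $Fy\,\bar S\,Fz$, $Fx\,\bar T\,Fz$.

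Now the triangular lemma in $\D$ gives $Fy\,\bar T\,Fz$, i.e. $(Fy,Fz)\colon FW\rightarrow FX\times FX$ factors through $\bar t=c\circ Ft$, say $(Fy,Fz)=c\circ Ft\circ v$. As $c\circ F(y,z)=(Fy,Fz)$ as well and $c$ is monic, we get $F(y,z)=Ft\circ v$, so $F(y,z)\colon FW\rightarrow F(X\times X)$ factors through the monomorphism $Ft\colon FT_0\rightarrow F(X\times X)$. By (1), $(y,z)$ then factors through $t$ in $\C$, that is, $yTz$; since $X$, $R$, $S$, $T$ and $W$ were arbitrary, $\C$ satisfies the triangular lemma.

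The step I expect to demand the most care is the verification in the second paragraph that $\bar R$, $\bar S$, $\bar T$ remain equivalence relations and still satisfy $\bar R\cap\bar S\leqslant\bar T$: because $F$ need not preserve finite products — this is precisely the situation of the forgetful functor $\Pt_{\C}(X)\rightarrow\C$, which is the case of interest — one cannot simply invoke that a finite-limit-preserving functor preserves internal equivalence relations, and must instead reconstruct each clause by hand, keeping track of the comparison morphisms $c$. The feature that makes this manageable is that the projections of $\bar R$ are literally $Fr_1$ and $Fr_2$, so that every witnessing morphism in $\C$ has an immediate image witnessing the corresponding fact in $\D$.
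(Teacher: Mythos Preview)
Your proof is correct and follows the same approach as the paper's sketch: transport the data along $F$ using that the comparison $c\colon F(X\times X)\rightarrow FX\times FX$ is monic, apply the triangular lemma in $\D$, and reflect the resulting factorization back via conservativity (your fact~(1) is exactly the paper's reformulation of ``$yTz$'' as a certain pullback projection being an isomorphism). You supply considerably more detail than the paper --- in particular on why $\bar R,\bar S,\bar T$ remain equivalence relations with $\bar R\cap\bar S\leqslant\bar T$, which the paper relegates to the remark preceding the proof --- but the strategy is identical.
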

\noindent
Note that the assumptions on the functor $F$ imply that it preserves monomorphisms, and that if $E$ is an equivalence relation in $\C$, then $F(E)$ -- the relation obtained by applying $F$ to the representative of $E$ -- is an equivalence relation in $\D$. We provide a sketch of the proof below, which is a standard preservation/reflection argument. 
\begin{proof}[Sketch]
	Suppose that $R,S,T$ are equivalence relations on an object $X$ in $\C$ such that $R \cap S \leqslant T$ and let $x,y,z\colon S \rightarrow X$ be morphisms as in Definition~\ref{def:triangle-lemma}, where $xRySz$ and $xTz$.  Then we are required to show that $yTz$, which is equivalent to showing that in the pullback diagram
	\[
	\xymatrix{
		P \ar[r]^{p_1} \ar[d]_{p_2} & T \ar[d]^t \\
		S \ar[r]_{(y,z)} & X \times X
	}
	\]
	$p_2$ is an isomorphism. Applying $F$ to the diagram above, we obtain a pullback diagram in $\D$. The assumptions on $F$ easily imply that the canonical morphism $F(X \times X) \rightarrow F(X) \times F(X)$ is a monomorphism, which implies that $(F(P), F(p_1), F(p_2))$ form a pullback of $F(t)$ along $(F(y), F(z))$. Since $F(x)F(R)F(y)F(S)F(z)$ and $F(x)F(T)F(z)$ and since $\D$ satisfies the triangular lemma, $(F(y), F(z))$ factors through $T$, which implies that $F(p_2)$ is an isomorphism, so that $p_2$ is an isomorphism since $F$ reflects isomorphisms.
\end{proof}
\begin{corollary} \label{cor:traingular-lemma}
	If $\C$ is a finitely complete category which satisfies the triangular lemma, then so does $\C \downarrow X$ and $X \downarrow \C$  for any object $X$. In particular, it follows that $\Pt_{\C}(X)$ satisfies the triangular lemma if $\C$ does.
\end{corollary}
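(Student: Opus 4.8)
The plan is to derive all three assertions from Proposition~\ref{prop:points-triangular-lemma-persevation-reflection}, applied each time to a suitable forgetful functor. The elementary input I would use is the standard fact that, for a \emph{connected} index category, limits in a slice category $\C\downarrow X$ and in a coslice category $X\downarrow\C$ are created by the forgetful functor to $\C$. Since the index categories for pullbacks and for equalizers are connected, this already shows that the forgetful functors $\C\downarrow X\to\C$ and $X\downarrow\C\to\C$ preserve pullbacks and equalizers. Each of these functors is moreover conservative, because a morphism in $\C\downarrow X$ or in $X\downarrow\C$ is an isomorphism exactly when its underlying morphism in $\C$ is one. Finally, both $\C\downarrow X$ and $X\downarrow\C$ are finitely complete whenever $\C$ is: the terminal object of $\C\downarrow X$ is $1_X\colon X\to X$, the terminal object of $X\downarrow\C$ is the unique morphism from $X$ to a terminal object of $\C$, and in both categories pullbacks are computed as in $\C$.

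Granting this, the slice and coslice cases are immediate. Applying Proposition~\ref{prop:points-triangular-lemma-persevation-reflection} with $F$ the forgetful functor $\C\downarrow X\to\C$ and $\D=\C$ shows that $\C\downarrow X$ satisfies the triangular lemma; the same argument with $F$ the forgetful functor $X\downarrow\C\to\C$ shows that $X\downarrow\C$ does.

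For the statement about points, I would invoke the isomorphism of categories $\Pt_{\C}(X)\cong (X\downarrow\C)\downarrow 1_X$, obtained by sending a point $(A,p,s)$ to the object $s\colon X\to A$ of $X\downarrow\C$ together with the morphism $p$ from it to the object $1_X\colon X\to X$: the equation $p\circ s=1_X$ is precisely the condition that $p$ be a morphism in $X\downarrow\C$ with codomain $1_X$, and a morphism of points is precisely a morphism in $X\downarrow\C$ commuting with these structure morphisms. Since $X\downarrow\C$ is finitely complete and, by the previous step, satisfies the triangular lemma, the slice case applied to the category $X\downarrow\C$ and its object $1_X$ shows that $(X\downarrow\C)\downarrow 1_X\cong\Pt_{\C}(X)$ satisfies the triangular lemma. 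Alternatively, one can check directly that the functor $\Pt_{\C}(X)\to\C$ sending $(A,p,s)$ to $A$ is conservative and preserves pullbacks and equalizers --- limits in $\Pt_{\C}(X)$ being computed on underlying objects, with the required splitting induced by the relevant universal property --- and then apply Proposition~\ref{prop:points-triangular-lemma-persevation-reflection} one final time.

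The only point needing genuine care is the bookkeeping for limits in the (double) (co)slice categories involved, i.e.\ checking that the canonical structure morphism of a pullback or equalizer is the one forced by the universal property; everything else reduces to the standard creation-of-connected-limits fact and to the evident conservativity, and presents no difficulty.
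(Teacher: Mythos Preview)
Your proposal is correct and follows essentially the same approach as the paper: apply Proposition~\ref{prop:points-triangular-lemma-persevation-reflection} to the forgetful functors $\C\downarrow X\to\C$, $X\downarrow\C\to\C$, and $\Pt_{\C}(X)\to\C$, each being conservative and preserving pullbacks and equalizers. The paper states this in one sentence without the supporting details you supply; your iterated-slice argument for $\Pt_{\C}(X)$ is a minor variant, and your alternative direct application is exactly the paper's route.
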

\begin{proof}
	The proof follows from the fact that the codomain-assigning functor $X \downarrow \C \rightarrow \C$ and the domain-assigning functors $\C \downarrow X \rightarrow \C$ and $\Pt_{\C}(X) \rightarrow \C$ satisfy the conditions of Proposition~\ref{prop:points-triangular-lemma-persevation-reflection}. 
\end{proof}
\begin{remark}
	The trapezoid lemma \cite{Chajda2003}, which for varieties is equivalent to congruence distributivity, has also recently been studied in the categorical setting (see \cite{GranRodeloNgeufue2019}). The same argument above, also applies to the trapezoid lemma, so that if a finitely complete category $\C$ satisfies the trapezoid lemma, then so does $\C \downarrow X$, $X\downarrow \C$ and $\Pt_{\C}(X)$.
\end{remark}

\begin{corollary}
	Every finitely complete category $\C$ satisfying the triangular lemma is locally anticommutative. 
\end{corollary}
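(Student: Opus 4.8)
The plan is to simply chain together the two facts already established in the excerpt. Fix an arbitrary object $X$ of $\C$. As recalled at the start of Section~\ref{section:locally-anticommutative-categories}, the fibre $\Pt_{\C}(X)$ is always pointed, with zero object $(X,1_X,1_X)$, and it is finitely complete because $\C$ is. So the first step is just to observe that $\Pt_{\C}(X)$ is a pointed, finitely complete category.

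Next I would invoke Corollary~\ref{cor:traingular-lemma}: since $\C$ is finitely complete and satisfies the triangular lemma, so does $\Pt_{\C}(X)$ (this is precisely the last assertion of that corollary, obtained by applying the preservation/reflection argument of Proposition~\ref{prop:points-triangular-lemma-persevation-reflection} to the domain-assigning functor $\Pt_{\C}(X) \rightarrow \C$). Hence $\Pt_{\C}(X)$ is a pointed finitely complete category satisfying the triangular lemma.

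Finally, by Remark~\ref{rem:pointed-traingular-lemma-anticommutative}, any pointed finitely complete category satisfying the triangular lemma is anticommutative, so $\Pt_{\C}(X)$ is anticommutative. Since $X$ was arbitrary, $\C$ is locally anticommutative by Definition~\ref{def:locally-anticommutativity}.

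The only point requiring any attention is checking that the hypotheses of the two cited results genuinely hold for $\Pt_{\C}(X)$ — namely pointedness and finite completeness of the fibre — but both are standard and are recorded in the excerpt, so there is no real obstacle. All of the substantive content has already been absorbed into Corollary~\ref{cor:traingular-lemma} (the transfer of the triangular lemma to slices and to $\Pt_{\C}(X)$) and into Remark~\ref{rem:pointed-traingular-lemma-anticommutative} (the deduction of anticommutativity from the triangular lemma in the pointed setting, via the element-wise argument used for Proposition~\ref{prop:directly-decomposable-congruence-class-anti-commutative}).
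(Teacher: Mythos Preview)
Your proof is correct and follows exactly the same route as the paper's own proof: invoke Corollary~\ref{cor:traingular-lemma} to pass the triangular lemma to each $\Pt_{\C}(X)$, then apply Remark~\ref{rem:pointed-traingular-lemma-anticommutative} to conclude anticommutativity of the fibre. Your extra remarks about verifying pointedness and finite completeness of $\Pt_{\C}(X)$ are fine and make explicit what the paper leaves tacit.
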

\begin{proof}
	By Corollary~\ref{cor:traingular-lemma} every category of points $\Pt_{\C}(X)$ satisfies the triangular lemma and is finitely complete, and the result follows by Remark~\ref{rem:pointed-traingular-lemma-anticommutative}.
\end{proof}
\begin{example}
Every finitely complete majority category is locally anticommutative. 
\end{example} 
\begin{proof}
If $\C$ is any finitely complete majority category, then $\Pt_{\C}(X)$ is a pointed finitely complete majority category (see Proposition~2.4 in \cite{Hoe18a}) for any object $X$ in $\C$, so that the result follows by Proposition~\ref{prop:pointed-majority-anticommutative}. 
\end{proof}
Much of what follows in the discussion below is just a slight adaption of the proof of Theorem 2.11 in \cite{Bou02} for local anticommutativity, and has been motivated from the results of \cite{Pedicchio1996}. Our main aim in what follows is to provide sufficient context for Remark~\ref{rem:modular-groupoid-distributive}, and for this reason we keep the length of the discussion minimal so as to be self-contained. 

Let $\C$ be a finitely complete category. For any object $X$ in $\C$, we may associate the object $(X \times X, \pi_1, \Delta_X)$ in $\Pt_{\C}(X)$, where $\pi_1$ is the canonical projection, and $\Delta_X$ is the diagonal morphism. Moreover, any equivalence relation $E$ represented by a monomorphism $e = (e_1, e_2)\colon  E_0 \rightarrow X \times X$ in $\C$ determines an object $(E_0, e_1, d_E)$ in $\Pt_{\C}(X)$ where $d_E\colon X \rightarrow E$ is the unique morphism with $e \circ d_E = \Delta_X$. Given any two equivalence relations $R$ and $S$ represented by the monomorphisms $r = (r_1, r_2)\colon R_0 \rightarrow X \times X$ and $s = (s_1, s_2)\colon S_0 \rightarrow X \times X$ respectively, then the morphisms $r$ and $s$ can be viewed as monomorphisms $(R_0, r_1, d_R) \xrightarrow{r} (X \times X, \pi_1, \Delta_X)$ and  $(S_0, s_1, d_S) \xrightarrow{s} (X \times X, \pi_1, \Delta_X)$ in $\Pt_{\C}(X)$. In what follows, we will say that the equivalence relations $R$ and $S$ \emph{commute}, if the monomorphisms $r$ and $s$ commute in $\Pt_{\C}(X)$. An equivalence relation is then said to be abelian if it commutes with itself.  Consider the pullback below:
\[
\xymatrix{
R_0 \times_X S_0 \ar[r]^-{p_1} \ar[d]_-{p_2} &  S_0 \ar[d]^{s_1} \\
R_0\ar[r]_{r_2} & X
}
\]
Set theoretically, the object $R_0 \times_X S_0$ may be considered as the subobject of $X^3$ consisting of all triples $(x,y,z)$ such that $x R y$ and $y S z$. Any morphism $p\colon  R_0 \times_X S_0 \rightarrow X$ satisfying $p(x,x,y) = y = p(y,x,x)$, i.e., any partial Mal'tsev operation $p\colon  R_0 \times_X S_0 \rightarrow X$, determines a cooperator for $r$ and $s$ in $\Pt_{\C}(X)$:  let $P$ be the product of $(R_0, r_1, d_R)$ and $(S_0, s_1, d_S)$ in $\Pt_{\C}(X)$, then the object of $P$, which is given by the pullback of $r_1$ along $s_1$ in $\C$, may too be presented set-theoretically as the subobject of $X^2 \times X^2$  consisting of all those pairs $((x,y), (x,z))$ such that $x R y$ and $x S z$. Now, given a morphism $p$ as above, the morphism $\phi\colon P \rightarrow X \times X$ defined by
\[
\phi ((x,y), (x,z)) = (x, p(y,x,z)),
\]
is readily seen to be a cooperator for $r$ and $s$ in $\Pt_{\C}(X)$. Conversely, any cooperator for $r$ and $s$ determines a partial  Mal'tsev operation $p\colon  R_0 \times_X S_0 \rightarrow X$ by the formula above.
\begin{proposition} \label{prop:local-anticommutative-commuting-equivalence-relations}
	Let $\C$ be a finitely complete locally anticommutative category. If $R$ and $S$ are any equivalence relations on the same object $X$ in $\C$ which commute, then $R \cap S = \Delta_X$.
\end{proposition}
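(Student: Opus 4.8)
The plan is to unwind the definition of ``$R$ and $S$ commute'' and feed it into the anticommutativity of $\Pt_{\C}(X)$, then read off $R\cap S = \Delta_X$ from the resulting pullback. Write $r = (r_1,r_2)\colon R_0 \rightarrow X\times X$ and $s = (s_1,s_2)\colon S_0 \rightarrow X \times X$ for the representing monomorphisms, viewed as monomorphisms $(R_0,r_1,d_R) \xrightarrow{r} (X\times X, \pi_1,\Delta_X)$ and $(S_0,s_1,d_S)\xrightarrow{s} (X\times X,\pi_1,\Delta_X)$ in $\Pt_{\C}(X)$ as in the discussion preceding the proposition. By hypothesis these two monomorphisms commute in $\Pt_{\C}(X)$, and since $\C$ is finitely complete so is $\Pt_{\C}(X)$; it is pointed, has binary products and kernels, and is anticommutative by assumption. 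Hence $r$ and $s$ are disjoint in $\Pt_{\C}(X)$, and since the kernel of a monomorphism is $0$, Proposition~\ref{prop:reformulation-kernels-and-pullbacks} tells us that the pullback of $r$ along $s$ in $\Pt_{\C}(X)$ is a zero object.

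Next I would identify this pullback concretely. Limits in $\Pt_{\C}(X)$ are computed on underlying objects, so the pullback of $r$ along $s$ in $\Pt_{\C}(X)$ has underlying object the pullback $R_0 \times_{X\times X} S_0$ in $\C$, whose diagonal $m\colon (R\cap S)_0 \rightarrow X\times X$ is exactly the monomorphism representing $R\cap S$ (this is the description of intersections of relations recalled in the introduction). Its structure morphism in $\Pt_{\C}(X)$ is $\pi = \pi_1 \circ m$ (since the pullback projection to $R_0$ is a morphism of points, hence commutes with the projections to $X$), and its section is the morphism $d\colon X \rightarrow (R\cap S)_0$ induced by $d_R$ and $d_S$, which satisfies $m\circ d = r\circ d_R = \Delta_X$ and $\pi \circ d = 1_X$. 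Saying $(R\cap S)_0$ is a zero object of $\Pt_{\C}(X)$ means it is isomorphic to $(X,1_X,1_X)$; the unique morphism $(R\cap S)_0 \rightarrow (X,1_X,1_X)$ in $\Pt_{\C}(X)$ is forced to be $\pi$, so $\pi$ is an isomorphism, and then $d$, being a right inverse of the isomorphism $\pi$, is its two-sided inverse.

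Finally, from $m\circ d = \Delta_X$ with $d$ an isomorphism one concludes that $m$ and $\Delta_X$ represent the same subobject of $X\times X$, i.e. $R\cap S = \Delta_X$; equivalently, $\Delta_X \leqslant R\cap S$ holds by reflexivity of $R$ and $S$, while $m = m\circ d\circ \pi = \Delta_X\circ \pi$ gives the reverse inclusion. I do not expect a serious obstacle: the only points needing care are the bookkeeping of the $\Pt_{\C}(X)$-structure on the pullback (checking that its projection is $\pi_1\circ m$ and that the zero-object condition forces this projection to be invertible) together with the identification of the underlying object of that pullback with the representing object of $R\cap S$.
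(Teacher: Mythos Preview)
Your proof is correct and follows exactly the same approach as the paper: identify the pullback of $r$ along $s$ in $\Pt_{\C}(X)$ with the representing object of $R\cap S$, use anticommutativity (via disjointness of monomorphisms) to conclude this pullback is the zero object $(X,1_X,1_X)$, and read off $R\cap S=\Delta_X$. The paper's proof is a one-sentence version of what you wrote; your additional bookkeeping about the $\Pt_{\C}(X)$-structure on the pullback and the inverse-to-$\pi$ argument is a careful unpacking of that sentence rather than a different route.
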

\begin{proof}
This is immediate since $R \cap S$ is represented by the pullback of $r$ along $s$ in $\Pt_{\C}(X)$ (which must be the zero object since $\Pt_{\C}(X)$ is anticommutative), and the zero object in $\Pt_{\C}(X)$ is $(X,1_X, 1_X)$.
\end{proof} 
\begin{remark} \label{rem:connecting-equivalence-relations}

If the equivalence relations $R$ and $S$ admit a \emph{connector} in the sense of \cite{BouGra02}, i.e., $R$ and $S$ are \emph{connected}, then $R$ and $S$ commute. This is because a connector between $R$ and $S$ is a certain partial Mal'tsev operation $p:R_0 \times_X S_0 \rightarrow X$ satisfying some additional requirements.  
\end{remark}
\begin{definition} \label{def-internal-category}
An internal category $C$ in a category $\C$ is a diagram
\[
\xymatrix{
C_2 \ar@<2ex>[r]^{p_1}  \ar@<-2ex>[r]_{p_2} \ar[r]|m  & C_1 \ar@<2ex>[r]^{d_1}  \ar@<-2ex>[r]_{d_2} & C_0 \ar[l]|{s}
}
\]
in $\C$ where the square
\[
\xymatrix{
C_2\ar[r]^{p_2} \ar[d]_{p_1} & C_1 \ar[d]^{d_1}\\
C_1 \ar[r]_{d_2} & C_0
}
\]
is a pullback, and we have the following relations
\begin{enumerate}[(i)]
\item $d_1 \circ s = 1_{C_0} =  d_2 \circ s$
\item $m \circ (1_{C_1}, s\circ d_2) = 1_{C_1} = m \circ (s\circ d_1,  1_{C_1})$
\item $d_1 \circ p_1 = d_1 \circ m$ and $d_2 \circ p_2 = d_2 \circ m$
\item $m \circ (p_1\circ q_1, m \circ q_2) = m \circ (m \circ q_1, p_2\circ q_2)$ where 
\[
\xymatrix{
C_3 \ar[d]_{q_1} \ar[r]^{q_2}& C_2 \ar[d]^{p_1} \\
C_2 \ar[r]_{p_2} & C_1
}
\]
is a pullback, and $(p_1\circ q_1, m \circ q_2), (m \circ q_1, p_2\circ q_2):C_3 \rightarrow C_2$ are the morphisms induced by the pullback $(C_2, p_1,p_2)$. 
\end{enumerate}
\end{definition} 
\begin{definition} \label{def-internal-groupoid}
An internal category $C$ as in Definition~\ref{def-internal-category} is called an \emph{internal groupoid} if there exists an ``inverse'' morphism $\sigma: C_1 \rightarrow C_1$ satisfying the following:
\[
d_1 \circ \sigma = d_2, \quad d_2 \circ \sigma =  d_1 ,
\]
and 
\[
m \circ (1_{C_1}, \sigma) = s \circ d_1, \quad  m \circ (\sigma, 1_{C_1}) = s \circ d_2.
\]
\end{definition}
\begin{corollary} \label{cor:internal-groupoid}
Given any internal groupoid $C$ as in Definition \ref{def-internal-category} and Definition~\ref{def-internal-groupoid} in any finitely complete locally anticommutative category $\C$, the morphism $(d_1, d_2):C_1 \rightarrow C_0 \times C_0$ is a monomorphism, i.e., any internal groupoid in a finitely complete locally anticommutative category is an equivalence relation.
\end{corollary}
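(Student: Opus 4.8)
The plan is to realise the kernel pair of $(d_1,d_2)\colon C_1 \rightarrow C_0 \times C_0$ as the intersection of two \emph{commuting} equivalence relations on $C_1$, and then to apply Proposition~\ref{prop:local-anticommutative-commuting-equivalence-relations}. Concretely, put $R = \Eq(d_1)$ and $S = \Eq(d_2)$, both viewed as equivalence relations on the object $C_1$. Since the intersection of internal equivalence relations is computed by pulling back their representing monomorphisms, a pair of generalized elements lies in $R \cap S$ exactly when its two $d_1$-components agree and its two $d_2$-components agree, i.e. $R \cap S = \Eq((d_1,d_2))$ is precisely the kernel pair of $(d_1,d_2)$. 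As $(d_1,d_2)$ is a monomorphism if and only if its kernel pair is the diagonal $\Delta_{C_1}$, it suffices to prove that $R$ and $S$ commute; once $(d_1,d_2)$ is a monomorphism, the morphisms $s$, $\sigma$ and $m$ of the groupoid immediately exhibit the subobject $(d_1,d_2)\colon C_1 \rightarrow C_0\times C_0$ as reflexive, symmetric and transitive, hence as an equivalence relation on $C_0$.

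To show that $R$ and $S$ commute, by the discussion preceding Proposition~\ref{prop:local-anticommutative-commuting-equivalence-relations} it is enough to exhibit a partial Mal'tsev operation $p\colon R_0 \times_{C_1} S_0 \rightarrow C_1$, i.e. a morphism with $p(x,x,y) = y = p(y,x,x)$; such a $p$ induces a cooperator for the representing monomorphisms of $R$ and $S$ in $\Pt_{\C}(C_1)$. Set-theoretically, $R_0 \times_{C_1} S_0$ is the subobject of $C_1^3$ of triples $(f,g,h)$ with $d_1 f = d_1 g$ and $d_2 g = d_2 h$, and I take
\[
p(f,g,h) = m(h, m(\sigma g, f)),
\]
which is informally ``$h \circ g^{-1}\circ f$''. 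One first checks, using the pullback defining $C_2$ and the identities $d_1\sigma = d_2$, $d_2\sigma = d_1$, that on $R_0\times_{C_1}S_0$ the inner and outer applications of $m$ are legitimate, so that $p$ is a well-defined morphism. The identity $p(x,x,y)=y$ then follows from $m\circ(\sigma,1_{C_1}) = s\circ d_2$, the unit law $m\circ(1_{C_1}, s\circ d_2) = 1_{C_1}$ of axiom~(ii) of Definition~\ref{def-internal-category}, and the fact that $d_2 x = d_2 y$ on the relevant subobject; the identity $p(y,x,x)=y$ follows similarly, after rewriting $m(x, m(\sigma x, y))$ as $m(m(x,\sigma x), y)$ by the instance of axiom~(iv) of Definition~\ref{def-internal-category} at the ``point'' $(x,\sigma x, y)$ of $C_3$, and then using $m\circ(1_{C_1},\sigma) = s\circ d_1$ and $m\circ(s\circ d_1,1_{C_1}) = 1_{C_1}$.

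I expect the only real work to be this bookkeeping: repeatedly checking that composites factor through the correct pullbacks ($C_2$, and $C_3$ for the associativity step), and verifying the two partial-Mal'tsev identities as genuine equalities of morphisms rather than as informal diagram chases. Anticommutativity itself is used only at the very end: once $R$ and $S$ commute, Proposition~\ref{prop:local-anticommutative-commuting-equivalence-relations} gives $R\cap S = \Delta_{C_1}$, whence $(d_1,d_2)$ is a monomorphism. Alternatively, $p$ can be upgraded to a full connector between $R$ and $S$, so that Remark~\ref{rem:connecting-equivalence-relations} applies directly; but the weaker requirement above is all that is needed and keeps the argument self-contained.
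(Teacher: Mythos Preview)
Your proposal is correct and follows essentially the same route as the paper: the paper simply invokes the canonical connector between $\Eq(d_1)$ and $\Eq(d_2)$ supplied by Example~1.4 of \cite{BouGra02} and then applies Proposition~\ref{prop:local-anticommutative-commuting-equivalence-relations} via Remark~\ref{rem:connecting-equivalence-relations}, whereas you explicitly write out the partial Mal'tsev operation $p(f,g,h)=m(h,m(\sigma g,f))$ that underlies that connector and verify its two identities by hand. Your closing remark about upgrading $p$ to a full connector is exactly the shortcut the paper takes.
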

\begin{proof}
The groupoid structure determines a canonical connector between the equivalence relations $\Eq(d_1)$ and $\Eq(d_2)$ (see Example 1.4 in \cite{BouGra02}), so that in particular we have that $\Eq(d_1)$ and $\Eq(d_2)$ are connected, and hence $\Eq(d_1) \cap \Eq(d_2) = \Delta_{C_1}$ by Proposition~\ref{prop:local-anticommutative-commuting-equivalence-relations} and Remark~\ref{rem:connecting-equivalence-relations}. Then we have that $\Eq(d_1, d_2) = \Eq(d_1) \cap \Eq(d_2) = \Delta_{C_1}$ and hence $(d_1, d_2)$ is a monomorphism.
\end{proof}
\begin{remark} \label{rem:modular-groupoid-distributive}
In the paper \cite{Pedicchio1998}, M.C. Pedicchio showed that a congruence modular variety $\V$ is congruence distributive if and only if every internal groupoid in $\V$ is an equivalence relation (see Theorem~3.2 in \cite{Pedicchio1998}). Note, this fact was already announced in \cite{JanPed97}. In particular, it follows from Corollary~\ref{cor:internal-groupoid} that a congruence modular variety is congruence distributive if and only if it is locally anticommutative. By Theorem~\ref{thm:local-Mal'tsev-characterization} (iii) below, we have that a variety which is congruence modular and satisfies the triangular lemma on pullbacks is congruence distributive. In particular, this generalizes the equivalence of (a) and (d) of Theorem~1 in the paper \cite{Chajda2003}.
\end{remark}
\section{Characterization of anticommutative and locally anticommutative varieties}
Let $\V$ be a variety of universal algebras, and suppose that $f\colon A \rightarrow X$ and $g\colon B \rightarrow X$ are two homorphisms in $\V$. The pullback $A \times_X B$ of $f$ along $g$ is given by
\[
A \times_{X} B = \{(x,y) \mid f(x) = g(y)\},
\] 
together with the projections $p_1:(x,y) \mapsto x$ and $p_2:(x,y) \mapsto y$. As mentioned earlier, we note that the kernel congruence relation $\Eq(f)$ of a homomorphism $f$ in $\V$ is simply given by the pullback of $f$ along itself. In the proofs of Theorem~\ref{thm:Mal'tsev-characterization} and Theorem~\ref{thm:local-Mal'tsev-characterization}, we make use 2$\times$2 matrices to represent elements of a congruence $\Theta$ on the pullback $A \times_X B$. In particular, this is represented as follows:
\[
\mat{a}{b}{a'}{b'} \in \Theta \Longleftrightarrow (a,b) \Theta (a',b').
\]
\begin{theorem} \label{thm:Mal'tsev-characterization}
	For a pointed variety of universal algebras $\V$, the following are equivalent. 
	\begin{enumerate}
		\item $\V$ is anticommutative. 
		\item $\V$ admits unary terms $u_1,\dots,u_m$ and $v_1,\dots,v_m$, as well as $(m+2)$-ary terms $p_1,\dots,p_n$ satisfying the equations:
		\begin{itemize}
			\item $p_1(u_1(x),\dots,u_m(x),x,0) = x$ and $p_1(v_1(x),\dots,v_m(x),0,x) = 0$.
			\item $p_{i+1}(u_1(x),\dots,u_m(x),x,0) = p_{i}(u_1(x),\dots,u_m(x),0,x)$. 
			\item $p_{i+1}(v_1(x),\dots,v_m(x),0,x) = p_{i}(v_1(x),\dots,v_m(x),x,0)$. 
			\item $p_n(u_1(x),\dots,u_m(x),0,x) = 0$ and $p_n(v_1(x),\dots,v_m(x),x,0) = 0$.
		\end{itemize}
	\end{enumerate}
\end{theorem}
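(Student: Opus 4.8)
The plan is to prove the two implications separately, using the elementwise form of anticommutativity provided by Proposition~\ref{prop:characterization-one}, namely condition $(ii)$: $\V$ is anticommutative if and only if for every homomorphism $\rho\colon X\times X\to Y$ and every $x\colon S\to X$, the equality $\rho\circ(x,0)=\rho\circ(0,x)$ forces $\rho\circ(x,0)=0=\rho\circ(0,x)$. Throughout, $0$ denotes the unique constant of the pointed variety $\V$.

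For $(2)\Rightarrow(1)$, suppose $\V$ admits terms $u_i,v_i$ ($1\le i\le m$) and $p_i$ ($1\le i\le n$) as in the statement, fix $\rho\colon X\times X\to Y$ and $x$ with $\rho(x,0)=\rho(0,x)=:c$, and write $b^u_i,a^u_i$ for $p_i$ evaluated at $(u_1(x),\dots,u_m(x),x,0)$ and $(u_1(x),\dots,u_m(x),0,x)$, and $b^v_i,a^v_i$ likewise with the $v$'s. Since $\rho$ is a homomorphism and $p_i$ a term, the element $p_i\big(\rho(u_1(x),v_1(x)),\dots,\rho(u_m(x),v_m(x)),\rho(x,0),\rho(0,x)\big)$ of $Y$ equals $\rho(b^u_i,a^v_i)$; interchanging its last two arguments (legitimate because $\rho(x,0)=\rho(0,x)$) it also equals $\rho(a^u_i,b^v_i)$, so $\rho(b^u_i,a^v_i)=\rho(a^u_i,b^v_i)$ for all $i$. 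The first pair of equations gives $b^u_1=x$ and $a^v_1=0$, so this common value at $i=1$ is $\rho(x,0)=c$; the two telescoping families of equations say exactly $a^u_i=b^u_{i+1}$ and $b^v_i=a^v_{i+1}$, so the common value is unchanged from $i$ to $i+1$; and the last pair of equations gives $a^u_n=0=b^v_n$, so the common value at $i=n$ is $\rho(0,0)=0$. Hence $c=0$, which is Proposition~\ref{prop:characterization-one}$(ii)$.

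For $(1)\Rightarrow(2)$, I would work in the free algebra $F$ on a single generator $a$, form $F\times F$ with product inclusions $\iota_1,\iota_2\colon F\to F\times F$, and take the coequalizer $q\colon F\times F\to Q$ of $\iota_1,\iota_2$. First I would record the easy fact that $\ker q$ is the congruence $\Theta$ generated by the \emph{single} pair $\big((a,0),(0,a)\big)$: a quotient of $F\times F$ coequalizes $\iota_1,\iota_2$ precisely when it identifies their values on $a$, since homomorphisms out of $F$ are determined by that value. By Proposition~\ref{prop:characterization-with-coequalizers}, anticommutativity yields $q\circ\iota_1=0$, i.e.\ $(a,0)\mathrel{\Theta}(0,0)$. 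Applying Mal'cev's congruence‑generation lemma with $\big((a,0),(0,a)\big)$ as generating pair produces a chain $z_0=(a,0),z_1,\dots,z_n=(0,0)$ in $F\times F$ and unary polynomials $\theta_k$ (say $\theta_k$ has $l_k$ parameters) with $\{z_{k-1},z_k\}=\{\theta_k(a,0),\theta_k(0,a)\}$; as a polynomial on a product acts coordinatewise, $\theta_k(w_1,w_2)=\big(t_k(\mu_{k,\bullet}(a),w_1),\,t_k(\nu_{k,\bullet}(a),w_2)\big)$ for a \emph{single} term $t_k$ and unary terms $\mu_{k,j},\nu_{k,j}$. Taking $m=\sum_k l_k$, placing the $\mu_{k,\bullet}$ (resp.\ $\nu_{k,\bullet}$) into $u_1,\dots,u_m$ (resp.\ $v_1,\dots,v_m$) in the $k$-th block of slots, and letting $p_k$ apply $t_k$ to its $k$-th parameter block together with the first of its two distinguished slots when $z_{k-1}=\theta_k(a,0)$ and the second otherwise, the first‑coordinate projections $\pi_1(z_0)=a,\pi_1(z_1),\dots,\pi_1(z_n)=0$ become exactly the $u$‑family of equations and the second‑coordinate projections $\pi_2(z_0)=0,\dots,\pi_2(z_n)=0$ the $v$‑family.

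The verification in $(2)\Rightarrow(1)$ is routine; the one real obstacle lies in the bookkeeping for $(1)\Rightarrow(2)$. Two points need care. All of $p_1,\dots,p_n$ must draw their parameters from one common tuple $u_1,\dots,u_m$ (resp.\ $v_1,\dots,v_m$), which is what forces the block construction, each $p_k$ simply ignoring the slots outside its own block. And Mal'cev's lemma controls $\{z_{k-1},z_k\}$ only as an unordered pair, so an individual step of the chain may run in either direction — this is precisely what makes it necessary to have $p_k$ read either its first or its second distinguished slot depending on the step, after which both the $u$- and the $v$-families of equations fall out uniformly. Isolating the reduction of $\ker q$ to a single generating pair is what guarantees that nothing but $a$ and $0$ occupies those distinguished slots, so that the extracted identities have exactly the shape in the statement.
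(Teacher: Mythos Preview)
Your argument is correct and follows essentially the same route as the paper. For $(1)\Rightarrow(2)$, both you and the paper work in $F_{\V}(a)\times F_{\V}(a)$, identify the relevant congruence as the principal one generated by $((a,0),(0,a))$, obtain $(a,0)\,\Theta\,(0,0)$ from anticommutativity, and then read off the terms from a transitivity chain; the only cosmetic difference is that the paper closes under symmetry first and feeds \emph{both} generating pairs $\bigl((a,0),(0,a)\bigr)$ and $\bigl((0,a),(a,0)\bigr)$ into each $p_i$, so that the orientation of each step is absorbed into which of the two generator slots $p_i$ actually uses, whereas you keep a single generating pair and instead let $p_k$ select one of its two distinguished slots according to the orientation --- the two devices are interchangeable. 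You also supply the verification of $(2)\Rightarrow(1)$ via Proposition~\ref{prop:characterization-one}$(ii)$, which the paper leaves to the reader; your telescoping computation is the natural one.
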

\begin{proof}
	Let $\V$ be a pointed variety which satisfies $(ii)$ of Proposition~\ref{prop:characterization-one}, and let $\Theta$ be the congruence on  $F_{\V}(x) \times F_{\V}(x)$, generated by $(x,0) \Theta (0,x)$. Then in the quotient of $F_{\V}(x) \times F_{\V}(x)$ by $\Theta$, the element $(x,0)$ will be identified with $(0,0)$, so that $(x,0) \Theta (0,0)$. But $\Theta$ may be obtained by first closing the relation 
	\[
	\{\mat{x}{0}{0}{x}, \mat{0}{x}{x}{0}\},
	\]
	under reflexivity, then under all operations, and then under transitivity. Therefore, there exists elements $z_0,\dots,z_n$ such that $z_0 = (x,0)$ and $z_n = (0,0)$ and $z_{i-1} \Theta z_{i}$ for all $0<i$. For each $0< i$ we have that $(z_{i-1}, z_{i})$ has the form:
	\[
	 p_i\big( \mat{u_{1,i}(x)}{v_{1,i}(x)}{u_{1,i}(x)}{v_{1,i}(x)}, \dots ,\mat{u_{m_i, i}(x)}{v_{m_i, i} (x)}{u_{m_i, i}(x)}{v_{m_i, i}(x)},\mat{x}{0}{0}{x}, \mat{0}{x}{x}{0}\big),
	\]
	where the $u$'s and the $v$'s are unary terms. We may assume without loss of generality that $m_i = m$, and moreover, we may also assume that $u_{k, i} = u_k$ as well as $v_{k,i} = v_{k}$ for any $i = 1,\dots,m$. Then writing out the equalities component-wise gives the equations in the statement of the theorem. 
\end{proof}

\begin{remark} \label{rem:Mal'tsev-conditions-antilinear}
	Recall that a pointed variety of universal algebras $\V$ is unital if and only if it admits a J\`onsson-Tarski operation (i.e., a binary term $+$ satisfying $x + 0 = x = 0 + x$). Therefore, by Proposition~\ref{prop:regular-characterization} a variety of universal algebras is antilinear if and only if it admits a J\`onsson-Tarski operation and the terms of Theorem~\ref{thm:Mal'tsev-characterization}. 
\end{remark}
\noindent
\subsection{Characterization of locally anticommutative varieties}
Let $\C$ be any category with pullbacks, and let $(A,f,s)$ be any object of $\Pt_{\C}(X)$. Consider the diagram below where the square is a pullback.
\[
\xymatrix{
& X \ar[rd]|{(s,s)}\ar@/^1pc/@{->}[rrd]^{s} \ar@/_1pc/@{->}[rdd]_{s} & & \\
& & \Eq(f) \ar[r]^{p_2} \ar[d]_{p_1} \ar[dr]|d & A \ar[d]^{f} \\
& & A \ar[r]_{f} & X
}
\]
A binary product of $(A,f,s)$ with itself in $\Pt_{\C}(X)$ is given by $(\Eq(f), d, (s,s))$ together with the morphisms $p_1$ and $p_2$. If $\C$ has coequalizers, then so does $\Pt_{\C}(X)$, and moreover, they are computed as they are in $\C$. Using these stated facts, it is then possible to state a local version of Proposition~\ref{prop:characterization-with-coequalizers}, which is the content of the following proposition. 
\begin{proposition} \label{prop:locally-anticommutative-coequalizer-characterization}
Let $\C$ be a category with pullbacks and coequalizers. Then $\C$ is locally anticommutative if and only if for any split epimorphism $f: A \rightarrow X$ with splitting $s:X \rightarrow A$ and any coequalizer diagram
	\[
	\xymatrix{
		A \ar@<-.5ex>[r]_-{(1, s \circ f)} \ar@<.5ex>[r]^-{(s \circ f, 1)}  & \Eq(f) \ar@{->>}[r]^{q} & Q,
	}
	\]
	we have $q(s \circ f, 1) = q(s\circ f, s \circ f) = q(1, s \circ f)$.
\end{proposition}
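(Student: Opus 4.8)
The plan is to deduce the statement directly from Proposition~\ref{prop:characterization-with-coequalizers} applied to each fibre $\Pt_{\C}(X)$, using only the bookkeeping facts recalled just before the proposition. By Definition~\ref{def:locally-anticommutativity}, $\C$ is locally anticommutative precisely when $\Pt_{\C}(X)$ is anticommutative for every object $X$ of $\C$. Since $\C$ has pullbacks, each $\Pt_{\C}(X)$ is a pointed category with binary products (with zero object $(X,1_X,1_X)$), and since $\C$ has coequalizers, so does $\Pt_{\C}(X)$, these being computed as in $\C$. Hence Proposition~\ref{prop:characterization-with-coequalizers} applies to $\Pt_{\C}(X)$, and the task reduces to rewriting the condition ``$q\circ\iota_1 = 0 = q\circ\iota_2$ for every coequalizer $q$ of the two product inclusions of $(A,f,s)$ with itself'' in the notation of $\C$.

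First I would identify the self-product and its inclusions. As recalled above the product of $(A,f,s)$ with itself in $\Pt_{\C}(X)$ is $(\Eq(f),d,(s,s))$ with the canonical projections $p_1,p_2\colon \Eq(f)\rightarrow A$, so the $i$-th product inclusion $\iota_i\colon (A,f,s)\rightarrow (\Eq(f),d,(s,s))$ is the unique morphism with $p_i\circ\iota_i = 1_A$ and $p_j\circ\iota_i = 0$ for $j\neq i$, where the relevant zero morphism is the zero endomorphism $s\circ f$ of $(A,f,s)$. This forces $\iota_1 = (1_A, s\circ f)$ and $\iota_2 = (s\circ f, 1_A)$; a routine check shows both factor through the pullback $\Eq(f)$ (using $f\circ s\circ f = f$) and are morphisms of points. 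These are exactly the two parallel arrows in the coequalizer diagram of the statement.

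Next I would pin down the coequalizer and the relevant zero morphism. Because coequalizers in $\Pt_{\C}(X)$ are computed in $\C$, a coequalizer of $\iota_1,\iota_2$ in $\Pt_{\C}(X)$ has underlying data a coequalizer $q\colon \Eq(f)\rightarrow Q$ in $\C$, with point structure $(Q,\bar f, q\circ(s,s))$, where $\bar f$ is the unique factorization of $d$ through $q$ --- which exists since $d\circ\iota_1 = f = d\circ\iota_2$. Consequently the zero morphism $(A,f,s)\rightarrow (Q,\bar f, q\circ(s,s))$ in $\Pt_{\C}(X)$, being ``splitting after projection'', equals $q\circ(s,s)\circ f = q\circ(s\circ f, s\circ f)$. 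Substituting all of this, the condition ``$q\circ\iota_1 = 0 = q\circ\iota_2$'' becomes precisely $q\circ(1_A, s\circ f) = q\circ(s\circ f, s\circ f) = q\circ(s\circ f, 1_A)$. Letting $X$ range over all objects of $\C$ and $(A,f,s)$ over all objects of $\Pt_{\C}(X)$ --- that is, over all split epimorphisms $f$ with chosen splitting $s$ --- yields the proposition.

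The only genuine work, and the step I would be most careful with, is this last bit of bookkeeping: correctly recognizing $\iota_1,\iota_2$ as $(1_A, s\circ f)$ and $(s\circ f, 1_A)$, and identifying the zero morphism into the coequalizer object with $q\circ(s\circ f, s\circ f)$. Everything else is a formal invocation of Proposition~\ref{prop:characterization-with-coequalizers}, Definition~\ref{def:locally-anticommutativity}, and the already-stated facts that $\Pt_{\C}(X)$ inherits binary products from pullbacks in $\C$ and inherits coequalizers computed exactly as in $\C$.
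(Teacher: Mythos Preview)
Your proposal is correct and follows precisely the approach the paper indicates: the paper does not give an explicit proof of this proposition but states just before it that, using the description of products and coequalizers in $\Pt_{\C}(X)$, one obtains a local version of Proposition~\ref{prop:characterization-with-coequalizers}. Your identification of $\iota_1,\iota_2$ with $(1_A,s\circ f),(s\circ f,1_A)$ and of the zero morphism into the coequalizer with $q\circ(s\circ f,s\circ f)$ is exactly the bookkeeping the paper leaves implicit.
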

\noindent
Recall from the introduction that we say a variety $\V$ satisfies the triangular lemma on pullbacks, if for any two morphisms $f\colon A \rightarrow X$ and $g\colon B \rightarrow X$, and any congruence $\Theta$ on the pullback $A \times_X B$ of $f$ along $g$ we have: 
\[
(x,y)\Theta (x',y') \implies (x',y) \Theta (x',y'),
\]
for any $(x,y), (x',y), (x',y') \in A \times_X B$. 
		\[
		\xymatrix{
			& (x',y') \ar@{-}[d]|{\Eq(p_1)} \ar@{..}@/^2pc/[d]^{\Theta} \\
			(x,y)    \ar@{-}[ur]^{\Theta} \ar@{-}[r]_{\Eq(p_2)}  & (x',y) 
		}
		\]
\begin{theorem} \label{thm:local-Mal'tsev-characterization}
	For a variety $\V$ of universal algebras, the following are equivalent: 
	\begin{enumerate}[(i)]
		\item $\V$ is locally anticommutative.
		\item $\V$ admits binary terms $b_1,\dots,b_m$ and $c_1,\dots,c_m$ as well as $(m+2)$-ary terms $p_1,\dots,p_n$ such that:
		\begin{itemize}
			\item $p_1(b_1(x,y),\dots,b_m(x,y),x,y) = x$, 
			\item $p_{1}(c_1(x,y),\dots,c_m(x,y),y,x) = y$, 
			\item $p_{i+1}(b_1(x,y),\dots,b_m(x,y),x,y) = p_{i}(b_1(x,y),\dots,b_m(x,y),y,x)$,
			\item $p_{i+1}(c_1(x,y),\dots,c_m(x,y),y,x) =  p_{i}(c_1(x,y),\dots,c_m(x,y),x,y)$,
			\item $p_n(b_1(x,y),\dots,b_m(x,y),y,x) = x$, 
			\item $p_n(c_1(x,y),\dots,c_m(x,y),x,y) = x$.
			\item $b_i(x,x) = c_i(x,x)$
		\end{itemize}
		\item $\V$ satisfies the triangular lemma on pullbacks.
	\end{enumerate}
\end{theorem}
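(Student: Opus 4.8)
The plan is to establish the cycle $(i)\Rightarrow(ii)\Rightarrow(iii)\Rightarrow(i)$, using Proposition~\ref{prop:locally-anticommutative-coequalizer-characterization} as the link between the categorical statement $(i)$ and concrete congruence computations on pullbacks. I would dispose of $(iii)\Rightarrow(i)$ first, as it is short. By Proposition~\ref{prop:locally-anticommutative-coequalizer-characterization} it suffices to check, for each split epimorphism $f\colon A\to X$ with splitting $s$, that $(a,sf(a))\,\Psi\,(sf(a),sf(a))$ for all $a\in A$, where $\Psi$ is the congruence on $\Eq(f)$ generated by the pairs $((a,sf(a)),(sf(a),a))$. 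Since $\Eq(f)$ is the pullback of $f$ along itself and, using $fs=1_X$, the three elements $(a,sf(a))$, $(sf(a),sf(a))$, $(sf(a),a)$ all belong to it, the triangular lemma on pullbacks applied to $\Psi$ with $(x,y):=(a,sf(a))$ and $(x',y'):=(sf(a),a)$ yields $(sf(a),sf(a))\,\Psi\,(sf(a),a)$; together with the generating pair $(a,sf(a))\,\Psi\,(sf(a),a)$ and transitivity this gives the claim.

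For $(i)\Rightarrow(ii)$ I would run the standard free-algebra extraction, patterned on the proof of Theorem~\ref{thm:Mal'tsev-characterization}. Put $A=F_{\V}(x,y)$, $X=F_{\V}(x)$, let $f\colon A\to X$ send both generators to $x$, with the obvious splitting $s$, so that $sf$ is the substitution $y\mapsto x$ and the congruence $\Psi$ on $\Eq(f)$ of Proposition~\ref{prop:locally-anticommutative-coequalizer-characterization} is generated by the single pair $((x,y),(y,x))$. That proposition then gives $(x,y)\,\Psi\,(x,x)$, so there is a Mal'tsev chain $(x,y)=z_0,z_1,\dots,z_n=(x,x)$ in $\Eq(f)$ whose consecutive links arise by substituting the generating pair (or its reverse) into terms with parameters drawn from $\Eq(f)$. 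Expressing each $z_j$ and each parameter as a pair of binary terms in $x,y$, and normalising --- exactly as in Theorem~\ref{thm:Mal'tsev-characterization} --- so that every link uses the same number $m$ of parameters and the same parameter terms $b_1,\dots,b_m$ (first coordinate), $c_1,\dots,c_m$ (second coordinate), one reads off $(m+2)$-ary terms $p_1,\dots,p_n$ for which the coordinate-wise equations of the chain are precisely those in $(ii)$; the requirement that each parameter pair $(b_i(x,y),c_i(x,y))$ lie in $\Eq(f)$ translates into the identity $b_i(x,x)=c_i(x,x)$.

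For $(ii)\Rightarrow(iii)$ I would write down an explicit $\Theta$-chain out of the given terms. Let $f\colon A\to X$ and $g\colon B\to X$ be homomorphisms, $\Theta$ a congruence on $P=A\times_X B$, and $(x,y),(x',y),(x',y')\in P$ with $(x,y)\,\Theta\,(x',y')$; then $q(y)=q(y')$ (both equal $p(x')$), so the pairs $\mu_i:=(b_i(x',x),c_i(y,y'))$ lie in $P$ thanks to $b_i(x,x)=c_i(x,x)$. Defining $w_j:=p_j(\mu_1,\dots,\mu_m,(x',y'),(x,y))$ and $w_j':=p_j(\mu_1,\dots,\mu_m,(x,y),(x',y'))$ (with $p_j$ evaluated coordinate-wise on the subalgebra $P\subseteq A\times B$), the equations in $(ii)$ give $w_1=(x',y')$, $w_n'=(x',y)$ and $w_j'=w_{j+1}$ for $1\le j<n$, while $w_j\,\Theta\,w_j'$ holds because $(x,y)\,\Theta\,(x',y')$; chaining $w_1\,\Theta\,w_1'=w_2\,\Theta\,\cdots=w_n\,\Theta\,w_n'$ yields $(x',y')\,\Theta\,(x',y)$, as required. (Specialising this computation to $P=\Eq(f)$ also gives $(ii)\Rightarrow(i)$ directly through Proposition~\ref{prop:locally-anticommutative-coequalizer-characterization}.)

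The hard part will be $(i)\Rightarrow(ii)$: although the extraction is routine in principle, bringing an arbitrary Mal'tsev chain --- of arbitrary length, with links in either orientation and with varying parameter lists --- into the rigid shape of the equations in $(ii)$, and in particular making the orders of the last two arguments in identities $(3)$--$(6)$ and the endpoint identities $(1),(2),(5),(6)$ line up, requires the same careful bookkeeping as in the proof of Theorem~\ref{thm:Mal'tsev-characterization}.
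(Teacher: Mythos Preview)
Your proposal is correct and follows essentially the same route as the paper: the cycle $(i)\Rightarrow(ii)\Rightarrow(iii)\Rightarrow(i)$ via Proposition~\ref{prop:locally-anticommutative-coequalizer-characterization}, with $(i)\Rightarrow(ii)$ extracted from the principal congruence generated by $((x,y),(y,x))$ on $\Eq(f)$ for $f\colon F_\V(x,y)\to F_\V(x)$, and $(ii)\Rightarrow(iii)$ by building an explicit $\Theta$-chain from the terms. The only differences are cosmetic --- you present $(iii)\Rightarrow(i)$ first, and in $(ii)\Rightarrow(iii)$ you instantiate the $b_i$'s at $(x',x)$ and aim at $(x',y')\,\Theta\,(x',y)$, whereas the paper instantiates at $(x,x')$ and aims at $(x,y)\,\Theta\,(x',y)$ --- but the mechanics and the key observations (in particular that all generating pairs for $\Eq(q)$ lie in the congruence generated by $((x,y),(y,x))$, and that $b_i(x,x)=c_i(x,x)$ is exactly what forces the parameter pairs into the pullback) are identical.
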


\begin{proof}
	For $(i) \implies (ii)$, consider the free algebra $F_{\V}(x,y)$ in $\V$ over the set $\{x,y\}$. Let $f\colon F_{\V}(x,y) \rightarrow F_{\V}(z)$ and $s\colon  F_{\V}(z) \rightarrow F_{\V}(x,y)$  be the unique morphisms with $f(x) = z = f(y)$ and $s(z) = x$, respectively. Consider the coequalizer diagram
	\[
	\xymatrix{
		F_{\V}(x,y) \ar@<-.5ex>[r]_{(1, s \circ f)} \ar@<.5ex>[r]^{(s \circ f, 1)}  & \Eq(f) \ar@{->>}[r]^{q} & Q.
	}
	\]
	Applying Proposition~\ref{prop:locally-anticommutative-coequalizer-characterization} to the diagram above, it follows that $(x,y)\Eq(q) (x,x)$. The coequalizer $q$ is obtained by the quotient of $\Eq(f)$ by the congruence generated by the pairs $(s \circ f, 1)(b(x,y)), (1, s \circ f)(b(x,y))$ where $b$ is any binary term in the theory of $\V$. It is straightforward to verify that
	\[
	\big((s \circ f, 1)(b(x,y)), (1, s \circ f)(b(x,y))\big) = b\big(((x,x), (x,x)), ((x,y), (y,x))\big),
	\]
	and hence that $\Eq(q)$ is the principle congruence generated by $(x,y)\Eq(q) (y,x)$. Therefore, there exists elements $z_0,\dots,z_n$ such that $z_0 = (x,y)$ and $z_n = (x,x)$ and $z_{i-1} \Theta z_{i}$ for $0 < i$. Moreover, we have that $(z_{i-1}, z_{i})$ is equal to
	\[
	\small 
	 p_i\big( \mat{b_{1}(x,y)}{c_{1}(x,y)}{b_{1}(x,y)}{c_{1}(x,y)}, \dots ,\mat{b_{m}(x,y)}{c_{m} (x,y)}{b_{m}(x,y)}{c_{m}(x,y)},\mat{x}{y}{y}{x}, \mat{y}{x}{x}{y}\big),
	\]
	for all $0 < i$ and where $(b_i(x,y), c_i(x,y)) \in \Eq(f)$. Then $b_i(z,z) = c_i(z,z)$ and writing out the equations determined by $z_i \Theta z_{i+1}$ component-wise we get the equations in the statement of the theorem. For $(ii) \implies (iii)$, suppose that $A \times_X B$ is the pullback of a morphism $f:A \rightarrow X$ along another morphisms $g:B \rightarrow X$ in $\V$, and suppose that $(x,y), (x,y'), (x', y') \in A \times_X B$. Then $f(x') = f(x) = g(y) = g(y')$, and hence $f(b_i(x, x')) = b_i(f(x),f(x')) = c_i(g(y),g(y') = g(c_i(y,y'))$ so that $(b_i(x,x'), c_i (y,y')) \in A\times_X B$ for any $i = 1,\dots m$. Define the elements $(z_{0,i}, z_{1,i})$ of $\Theta$ so that $(z_{0,i}, z_{1,i})$ is equal to
	\[
	 p_i\big( \mat{b_{1}(x,x')}{c_{1}(y,y')}{b_{1}(x,x')}{c_{1}(y,y')}, \dots ,\mat{b_{m}(x,x')}{c_{m} (y,y')}{b_{m}(x,x')}{c_{m}(y,y')},\mat{x}{y}{x'}{y'}, \mat{x'}{y'}{x}{y}\big).
	\]
	Then the equations in the statement of the theorem ensure that $z_{0,1} = (x,y)$ and $z_{1,n} = (x', y)$, and that $z_{1,i} = z_{0, i+1}$ so that by the transitivity of $\Theta$ we have $(x,y) \Theta (x',y)$. For $(iii) \implies (i)$, we have to show that for any split epimorphism $p\colon  A \rightarrow X$ with splitting $s\colon  X \rightarrow A$, and any coequalizer 
	\[
	\xymatrix{
		X\ar@<-.5ex>[r]_-{(1, s \circ p)} \ar@<.5ex>[r]^-{(s \circ p, 1)} & \Eq(p) \ar@{->>}[r]^q & Q ,
	}
	\]
	that $q(s \circ p, 1) = q(s \circ p, s \circ p)$ according to Proposition~\ref{prop:locally-anticommutative-coequalizer-characterization}. But for any $x \in X$ we have that $(s\circ p (x), x), (x, s\circ p (x)), (s\circ p (x), s \circ p(x)) \in \Eq(p)$ and moreover we have $(x, s\circ p (x)) \Eq(q) (s\circ p (x), x)$ which implies by $(iii)$ that $(s \circ p(x), x)\Eq(q)(s \circ p(x), s \circ p(x))$ for any $x$, and the result follows. 
\end{proof}
\section{Concluding remarks}
Recall from earlier that a variety $\V$ is said to have \emph{directly decomposable congruence classes} (DDCC) \cite{Duda1986} if every congruence class $C$ on a product $X \times Y$ in $\V$ is such that $C = \pi_1(C) \times \pi_2(C)$. It was shown in \cite{Duda1986} that (DDCC) is also a Mal'tsev property, which turns out to be equivalent to the existence of binary terms $b_1,\dots,b_m$ and $c_1,\dots,c_m$ as well as $(m+2)$-ary terms $p_1,\dots,p_n$ such that:
		\begin{itemize}
			\item $p_1(b_1(x,y),\dots,b_m(x,y),x,y) = x$,
			\item $p_{1}(c_1(x,y),\dots,c_m(x,y),y,x) = y$, 
			\item $p_{i+1}(b_1(x,y),\dots,b_m(x,y),x,y) = p_{i}(b_1(x,y),\dots,b_m(x,y),y,x)$,
			\item $p_{i+1}(c_1(x,y),\dots,c_m(x,y),y,x) =  p_{i}(c_1(x,y),\dots,c_m(x,y),x,y)$,
			\item $p_n(b_1(x,y),\dots,b_m(x,y),y,x) = x$,
			\item $p_n(c_1(x,y),\dots,c_m(x,y),x,y) = x$.
		\end{itemize}
The above terms for (DDCC) differ from the terms in  Theorem~\ref{thm:local-Mal'tsev-characterization} by only one extra requirement, namely, that the $b_i(z,z) = c_i(z,z)$. This observation immediately gives us the following two corollaries of Theorem~\ref{thm:local-Mal'tsev-characterization}.
\begin{corollary}
Every locally anticommutative variety has \emph{(DDCC)}, and an idempotent variety is locally anticommutative if and only if it has \emph{(DDCC)}.
\end{corollary}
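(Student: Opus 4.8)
The plan is to obtain both halves of the statement directly from Theorem~\ref{thm:local-Mal'tsev-characterization} together with Duda's Mal'tsev characterization of (DDCC) recalled just above, the only real work being a term-by-term comparison of the two lists of identities. First I would note that neither condition requires pointedness: local anticommutativity of $\V$ concerns the pointed categories $\Pt_{\V}(X)$, which are always pointed, and (DDCC) is a condition on products in $\V$, so Theorem~\ref{thm:local-Mal'tsev-characterization} applies to $\V$ without restriction.

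For the first assertion, suppose $\V$ is locally anticommutative. By the implication $(i) \Rightarrow (ii)$ of Theorem~\ref{thm:local-Mal'tsev-characterization}, $\V$ admits binary terms $b_1,\dots,b_m$, $c_1,\dots,c_m$ and $(m+2)$-ary terms $p_1,\dots,p_n$ satisfying the seven families of identities listed there. Discarding the last one, namely $b_i(z,z) = c_i(z,z)$, the remaining six families are precisely the identities in Duda's characterization of (DDCC). Hence $\V$ has directly decomposable congruence classes, and this argument applies to any locally anticommutative variety.

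For the converse in the idempotent case, suppose $\V$ is idempotent and has (DDCC). By Duda's theorem $\V$ admits terms $b_i, c_i$ for $i = 1,\dots,m$ and $p_j$ for $j = 1,\dots,n$ satisfying the six families of identities above. Since $\V$ is idempotent, every term operation $t(x_1,\dots,x_k)$ of $\V$ satisfies $t(z,\dots,z) = z$; in particular $b_i(z,z) = z = c_i(z,z)$ for each $i$. Thus these same terms satisfy all seven families of identities in clause $(ii)$ of Theorem~\ref{thm:local-Mal'tsev-characterization}, and the implication $(ii) \Rightarrow (i)$ of that theorem shows $\V$ is locally anticommutative. Together with the first assertion this yields the claimed equivalence.

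I do not anticipate any genuine obstacle: the entire content resides in Theorem~\ref{thm:local-Mal'tsev-characterization} and in Duda's result, and the proof amounts to observing that the two Mal'tsev conditions coincide exactly once idempotency forces $b_i(z,z) = z = c_i(z,z)$. The only point deserving a moment's care is to confirm that Duda's list of terms matches the list in clause $(ii)$ of Theorem~\ref{thm:local-Mal'tsev-characterization} identity-for-identity apart from the single extra requirement, which is exactly the comparison displayed in the discussion preceding the corollary.
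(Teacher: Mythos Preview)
Your proposal is correct and matches the paper's approach exactly: the paper states that the corollary follows immediately from the observation that the Mal'tsev terms for (DDCC) differ from those in Theorem~\ref{thm:local-Mal'tsev-characterization}(ii) only by the single extra identity $b_i(z,z)=c_i(z,z)$, and you have simply written out that observation in full, including the point that idempotency forces this extra identity automatically.
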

\begin{remark}
The above corollary implies that every locally anticommutative variety of algebras has \emph{difunctional class relations} in the sense of \cite{HoefnagelJanelidzeRodelo2020}.
\end{remark}
\noindent
Together with Remark~\ref{rem:modular-groupoid-distributive} we also have:
\begin{corollary}
An idempotent congruence modular variety is congruence distributive if and only if it has directly decomposable congruence classes. 
\end{corollary}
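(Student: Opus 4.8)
The plan is simply to concatenate the two equivalences that are already in hand, so the proof will be a one-line deduction once the hypotheses are matched to the right cited results. Suppose $\V$ is an idempotent, congruence modular variety. On the one hand, Remark~\ref{rem:modular-groupoid-distributive} records that a congruence modular variety is congruence distributive if and only if it is locally anticommutative; this is Pedicchio's theorem identifying congruence distributivity among congruence modular varieties with the requirement that every internal groupoid be an equivalence relation, combined with Corollary~\ref{cor:internal-groupoid}. Since $\V$ is congruence modular, this equivalence applies verbatim to it.

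On the other hand, the corollary immediately preceding the present statement tells us that an idempotent variety is locally anticommutative if and only if it has directly decomposable congruence classes. The reason, as noted there, is that the Mal'tsev terms characterizing local anticommutativity in Theorem~\ref{thm:local-Mal'tsev-characterization}~(ii) are precisely Duda's terms for (DDCC) together with the single extra family of identities $b_i(z,z) = c_i(z,z)$, and in an idempotent variety every $n$-ary term $t$ satisfies $t(z,\dots,z) = z$, so these extra identities hold automatically. Since $\V$ is idempotent, this equivalence applies to it as well.

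Chaining the two gives: $\V$ is congruence distributive $\iff$ $\V$ is locally anticommutative $\iff$ $\V$ has (DDCC). I do not anticipate any genuine obstacle; the only point requiring care is to confirm that the two hypotheses split correctly between the cited results — congruence modularity feeds the first equivalence and idempotency feeds the second — which is exactly what the statement supplies. If a fully self-contained argument were wanted one could instead unwind both citations, invoking Theorem~\ref{thm:local-Mal'tsev-characterization}~(iii) (the triangular lemma on pullbacks) and Pedicchio's internal-groupoid criterion directly, but for the purposes of this corollary the composite of the two prior results suffices.
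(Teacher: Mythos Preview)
Your proposal is correct and matches the paper's own argument exactly: the paper simply introduces this corollary with the phrase ``Together with Remark~\ref{rem:modular-groupoid-distributive} we also have,'' i.e., it chains the preceding corollary (idempotent $\Rightarrow$ locally anticommutative $\Leftrightarrow$ (DDCC)) with Remark~\ref{rem:modular-groupoid-distributive} (congruence modular $\Rightarrow$ congruence distributive $\Leftrightarrow$ locally anticommutative), precisely as you do.
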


\end{document}